\numberwithin{equation}{subsection}
\newcommand{\G}{\Gamma}
\newcommand{\sg}{\sigma}
\newcommand{\mc}{\mathbb{C}}
\newcommand{\ca}{\curvearrowright}
\newcommand{\emm}{\mathcal{M}}
\newcommand{\enn}{\mathcal N}
\newcommand{\euu}{\mathcal{U}}
\newcommand{\El}{\mathcal{L}}
\newcommand{\rar}{\rightarrow}
\newcommand{\La}{\Lambda}
\newcommand{\tp}{\bar{\otimes}}
\newcommand{\Aa}{\mathcal{A}}
\newcommand{\bee}{\mathcal{B}}
\newcommand{\mr}{\mathcal{R}}
\newcommand{\ms}{\mathcal{S}}
\newcommand{\pee}{\mathcal{P}}
\newcommand{\Diag}{\operatorname{Diag}}
\newcommand{\Sp}{\operatorname{Sp}}
\newcommand{\GL}{\operatorname{GL}}
\begin{document}
	\newtheorem{Lemma}{Lemma}
	\theoremstyle{plain}
	\newtheorem{theorem}{Theorem~}[section]
	\newtheorem{main}{Main Theorem~}
	\newtheorem{lemma}[theorem]{Lemma~}
	\newtheorem{assumption}[theorem]{Assumption~}
	\newtheorem{proposition}[theorem]{Proposition~}
	\newtheorem{corollary}[theorem]{Corollary~}
	\newtheorem{definition}[theorem]{Definition~}
	\newtheorem{defi}[theorem]{Definition~}
	\newtheorem{notation}[theorem]{Notation~}
	\newtheorem{example}[theorem]{Example~}
	\newtheorem*{remark}{Remark~}
	\newtheorem{cor}{Corollary~}
	\newtheorem*{question}{Question}
	\newtheorem{claim}{Claim}
	\newtheorem*{conjecture}{Conjecture~}
	\newtheorem*{fact}{Fact~}
	\newtheorem*{thma}{Theorem A}
	\newtheorem*{thmb}{Theorem B}
	\renewcommand{\proofname}{\bf Proof}
	\newcommand{\email}{Email: }

	\title {New examples of Property (T) factors with trivial fundamental group and unique prime factorization \vskip 0.02 in
		\fontsize{11}{1}	\textit {Dedicated to the loving memory of Prof.\ Vaughan Jones} }
	
	\author{ Sayan Das}
	\date{}
	\maketitle
	\begin{abstract}
		In this paper we provide new examples of property (T) group factors with trivial fundamental group thereby providing more evidence towards Popa's conjecture on triviality of fundamental groups for property (T) group factors (page 9 \cite{Po13}; see also Problem 2, page 551 in Connes' book \cite{Co94}). Our groups arise as direct product of groups either in Class $\mathscr S$ or Class $\mathscr V$, as introduced in \cite{CDHK20}. We establish a unique prime factorization result for these product groups, thereby providing more evidence towards Popa's conjecture on prime decomposition of property (T) factors (page 9 \cite{Po13}).
	\end{abstract}
	
	\section{Introduction}
	Fundamental group is one of the very first isomorphism invariants of von Neumann algebras introduced by Murray and von Neumann in \cite{MvN43}. In \cite{MvN43}, they showed that the fundamental group of any McDuff factor is $\mathbb R_{+}$, but were unable to find examples of $\rm II_1$ factors with fundamental group different from $\mathbb R_{+}$. Over the years, the calculation of fundamental groups of specific type $\rm II_1$ factors was firmly established as one of the most challenging problems in the theory of von Neumann algebras. In fact, the first examples of $\rm II_1$ factors with fundamental group different from $\mathbb R_{+}$ was only found in 1980 by A. Connes \cite{Co80}. Connes showed that $\mathcal F(\El(G))$ is at most countable, whenever $G$ is an i.c.c. property (T) group. This beautiful result prompted Connes to formulate his famous rigidity conjecture about $W^*$-superrigidity of i.c.c. property (T) groups in \cite{Co82} (see also \cite[Problem 1, page 551]{Co94}). However, Connes' result still did not provide any explicit calculation of fundamental group of any of the aforementioned $\rm II_1$ factors. In particular, the problem of finding $\rm II_1$ factors with trivial fundamental group remained elusive.
	\vskip 0.01 in
Indeed, despite Connes' stunning result, it would take two more decades and Sorin Popa to finally obtain explicit examples of $\rm II_1$ factors with trivial fundamental group, \cite{Po01}. The breakthrough by Popa came via his deformotation/rigidity theory. In the next two decades, Popa's deformation/rigidity theory proved indispensable in tackling hitherto impossible problems in von Neumann algebras, such as providing first examples of $\rm II_1$ factors with trivial outer automorphism group (\cite{IPP05}), first examples of $W^*$-superrigid groups (\cite{IPV10}), first examples of virtually $W^*$ superrigid group actions (\cite{Pet10, PV10}) and many more. We refer the reader to the surveys \cite{Po07, Va10icm, Io18icm} for recent progress in von Neumann algebras using deformation/rigidity theory.
\vskip 0.02 in
 In spite of the aforementioned breakthrough results, no explicit calculations of fundamental groups of property (T) factors was known until very recently. In \cite{CDHK20}, I. Chifan, C. Houdayer, K. Khan and the author provided the first examples of property (T) group factors with trivial fundamental group. In fact, in \cite{CDHK20}, it was shown that $\mathcal F(\El(\G))=\{1\}$ for $\G \in \mathscr S \cup \mathscr V $, where $\mathscr S$ and $\mathscr V$ both contain infinitely many i.c.c. property (T) groups. We refer the reader to \cite[Section 3]{CDHK20} for the definition of the classes of groups $\mathscr S$ and $\mathscr V$. The aforementioned result provided the first supporting evidence towards the strong form of Connes' rigidity conjecture \cite{Jo00,Po07}. The main goal of this paper is to provide new examples of property (T) group factors with trivial fundamental group, thereby advancing\cite[Problem 2, page 551]{Co94} and providing new examples satisfying the last conjecture on page 9 in \cite{Po13}. The main theorem in this paper is the following
 
 \begin{thma}
 	Let $\G_1, \G_2 \in \mathscr S \cup \mathscr V$, and let $\G=\G_1 \times \G_2$. Then $\mathcal F(\El(\G))=\{1\}$.
 \end{thma}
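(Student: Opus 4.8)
The plan is to deduce Theorem A from two ingredients: the triviality of the fundamental group of each \emph{single} factor $\El(\G_i)$, which is exactly the main result of \cite{CDHK20}, together with a \emph{unique prime factorization} theorem for the product $\El(\G_1\times\G_2)=\El(\G_1)\bar\otimes\El(\G_2)$. Throughout write $M_i:=\El(\G_i)$ and $M:=\El(\G)=M_1\bar\otimes M_2$. Since each $\G_i$ is an i.c.c.\ property (T) group, each $M_i$ is a property (T) $\mathrm{II}_1$ factor and $M$ inherits property (T). The first step is to record that each $M_i$ is \emph{prime}, i.e.\ admits no nontrivial tensor decomposition; this should follow from the structural (bi-exactness-type) features of the classes $\mathscr S$ and $\mathscr V$ already exploited in \cite{CDHK20}, and it is what makes a prime-factorization statement meaningful.

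The technical heart is the unique prime factorization theorem, which I would phrase as follows: if $M_1\bar\otimes M_2\cong P_1\bar\otimes P_2$ with each $P_j$ a prime $\mathrm{II}_1$ factor, then there is a permutation $\sg$ of $\{1,2\}$ and scalars $t_1,t_2>0$ with $t_1t_2=1$ such that $P_j\cong M_{\sg(j)}^{t_j}$, up to unitary conjugacy inside $M$. The mechanism is Popa's intertwining-by-bimodules: starting from an arbitrary tensor splitting, one runs a deformation adapted to the product structure of $\G$ together with a spectral-gap/property (T) rigidity argument to \emph{locate} the two commuting subfactors relative to $M_1$ and $M_2$, forcing one of them into an amplification of $M_1$ and its relative commutant into the complementary amplification of $M_2$.

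The third step is the amplification bookkeeping that closes the argument. Let $t\in\mathcal{F}(M)$, so $M^t\cong M$. Absorbing the amplification into the first tensor factor gives $M_1^t\bar\otimes M_2\cong M_1\bar\otimes M_2$, with both sides written as tensor products of primes. The unique prime factorization theorem then yields $t_1,t_2>0$ with $t_1t_2=1$ and a permutation $\sg$ such that $M_1^t\cong M_{\sg(1)}^{t_1}$ and $M_2\cong M_{\sg(2)}^{t_2}$. If $\sg=\id$, then $M_2\cong M_2^{t_2}$ forces $t_2\in\mathcal{F}(M_2)=\{1\}$, hence $t_1=1$ and $M_1^t\cong M_1$, so $t\in\mathcal{F}(M_1)=\{1\}$. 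If $\sg$ is the transposition, then $M_2\cong M_1^{t_2}$ and therefore $M_1^t\cong M_2^{t_1}\cong M_1^{t_1t_2}=M_1$, so again $t\in\mathcal{F}(M_1)=\{1\}$. In either case $t=1$, which proves $\mathcal{F}(\El(\G))=\{1\}$. Note that this dichotomy never requires knowing whether $M_1$ and $M_2$ are stably isomorphic, so no hypothesis $\G_1\neq\G_2$ is needed.

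The main obstacle is the unique prime factorization theorem of the second step. The usual Ozawa–Popa route to such results proceeds through weak compactness and malleable deformations that are badly obstructed by property (T), which annihilates the deformation one would most like to use; the argument therefore has to be engineered to work \emph{with} property (T) rather than against it, which is precisely the difficulty that the classes $\mathscr S$ and $\mathscr V$ were designed to address. Carrying the single-factor techniques of \cite{CDHK20} through the extra commuting tensor factor — in particular controlling relative commutants and upgrading the located intertwined corners to genuine amplifications of $M_1$ and $M_2$ — is where the real work lies.
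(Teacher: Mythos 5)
Your reduction of Theorem A to a unique prime factorization statement plus the triviality of $\mathcal F(\El(\G_i))$ from \cite{CDHK20} is exactly the paper's strategy (Theorem B together with Corollaries \ref{fundpdtclassSV} and \ref{fundpdtclassS}), and your amplification bookkeeping is correct. In fact your handling of the transposed case is slightly cleaner than the paper's: from $M_2\cong M_1^{t_2}$ and $M_1^t\cong M_2^{t_1}$ with $t_1t_2=1$ you get $M_1^t\cong M_1^{t_1t_2}=M_1$ purely formally, whereas the paper rules out the swap by invoking extra structure (presence versus absence of a Cartan subalgebra when one factor comes from $\mathscr V$, and \cite[Theorem 4.6]{CDHK20} when both come from $\mathscr S$). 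That observation is a genuine small simplification and, as you note, removes any need to distinguish $\G_1$ from $\G_2$.

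The genuine gap is that the unique prime factorization theorem --- which is the entire technical content of the paper --- is asserted rather than proved, and the mechanism you sketch for it is not one that would work here. You propose ``a deformation adapted to the product structure together with a spectral-gap/property (T) rigidity argument,'' while simultaneously observing that property (T) kills the deformations one would want to use; this tension is not resolved in your proposal. What the paper actually does is case analysis on the classes: for a factor from $\mathscr V_1$ it writes $\El(\G\times\La)$ as a crossed product by the hyperbolic lattice $\La_n=\Sp(n,1)_{\mathbb Z}$ and applies the Popa--Vaes dichotomy \cite[Theorem 1.4]{PV12} to locate one tensor factor inside $\El(\G\times A)$, then uses property (T) against the amenable part $\El(A)$; for factors from $\mathscr S$ it uses the relative hyperbolicity of the Rips-construction groups and the dichotomy \cite[Theorem 5.3]{CDK19}, together with intertwining combinatorics from \cite{CDK19}, \cite{DHI16} and \cite{Is20} to intersect the various locations down to $\El(\G_1)$. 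In every case the upgrade from ``$\ms_i\prec_{\emm}\El(\G_1)$'' to a unitary conjugacy of amplifications goes through \cite[Proposition 12]{OP03} and Ge's theorem on intermediate subalgebras of tensor products, using primeness of the individual factors --- which you also assert without proof (the paper derives it from \cite{PV12} and \cite[Theorem 4.3]{CDHK20}). Without these ingredients, your second step is a statement of what must be shown, not a proof of it.
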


The proof of Theorem A proceeds via establishing unique prime factorization (UPF) results for the group factors $\El(\G)$, as in Theorem A. More precisely, we prove the following 
\begin{thmb}
	Let $\G_1, \G_2 \in \mathscr S \cup \mathscr V$, and let $\G=\G_1 \times \G_2$. Let $\ms_1$, $\ms_2$ be $\rm II_1$ factors such that $\El(\G)= \ms_1 \tp \ms_2$. Then there exists $t>0$, and $u \in \euu(\El(\G))$, such that $u\ms_1^tu^*= \El(\G_1)$ and $u\ms_2^{1/t}u^*= \El(\G_2)$ (or vice versa).	
	\end{thmb}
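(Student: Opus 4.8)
The plan is to run the unique prime factorization machinery built on Popa's intertwining-by-bimodules, in the spirit of Ozawa--Popa, with the rigidity of the classes $\mathscr S$ and $\mathscr V$ supplying the decisive location step. Write $M=\El(\G)$ and put $P_1=\El(\G_1)$, $P_2=\El(\G_2)$, so that beside the hypothesized abstract splitting $M=\ms_1\tp\ms_2$ we also have the canonical splitting $M=P_1\tp P_2$ coming from $\G=\G_1\times\G_2$. All four algebras are $\rm II_1$ factors, giving the commutation relations $\ms_1'\cap M=\ms_2$, $\ms_2'\cap M=\ms_1$, $P_1'\cap M=P_2$ and $P_2'\cap M=P_1$. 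Two preliminary remarks: neither $\ms_1$ nor $\ms_2$ is amenable, since otherwise $M$ would be McDuff, contradicting that $M=\El(\G_1\times\G_2)$ has property (T); and, because property (T) passes to tensor factors, each of $\ms_1,\ms_2$ itself has property (T). The goal is to match the abstract splitting with the canonical one up to amplification and unitary conjugacy.

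The analytic core is a location dichotomy for one abstract factor, say $\ms_1$. Exploiting the structural rigidity that \cite{CDHK20} establishes for groups in $\mathscr S\cup\mathscr V$ --- a bi-exactness-type control of commuting subalgebras inside $\El(\G_1\times\G_2)$ --- together with the fact that $\ms_1$ and $\ms_2=\ms_1'\cap M$ are commuting, diffuse and non-amenable, I would extract a dichotomy of the usual shape: either $\ms_1\prec_M P_i$ for some $i$, or $\ms_1$ is amenable relative to some $P_i$. In the second branch property (T) of $\ms_1$ is decisive: a subalgebra with property (T) that is amenable relative to $Q$ necessarily intertwines into $Q$, so applying this with $Q=P_i$ again gives $\ms_1\prec_M P_i$. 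Thus in either case $\ms_1\prec_M P_1$ or $\ms_1\prec_M P_2$. The substance of this step is to rule out the mixed or diagonal positions of $\ms_1$ relative to $P_1\tp P_2$, and it is here that the primeness of each $\El(\G_i)$ and the rigidity of $\mathscr S,\mathscr V$ enter.

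Granting the dichotomy, suppose without loss of generality $\ms_1\prec_M P_1$. Passing to relative commutants and using $\ms_1'\cap M=\ms_2$ together with $P_1'\cap M=P_2$ yields the complementary intertwining $P_2\prec_M\ms_2$. With this two-sided intertwining of the commuting pairs $(\ms_1,\ms_2)$ and $(P_1,P_2)$, the standard conversion of intertwining into conjugacy for commuting tensor factors applies: after amplifying so that the relevant partial isometries become unitaries and using factoriality to match relative commutants, one obtains $t>0$ and $u\in\euu(M)$ with $u\ms_1^t u^*=P_1$. Since amplification distributes across a tensor product with reciprocal parameters, $M=\ms_1^t\tp\ms_2^{1/t}$ with $\ms_2^{1/t}=(\ms_1^t)'\cap M$, so the same $u$ carries $\ms_2^{1/t}$ onto $(u\ms_1^t u^*)'\cap M=P_1'\cap M=P_2$, giving $u\ms_2^{1/t}u^*=P_2$. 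This is the assertion of Theorem B; the branch $\ms_1\prec_M P_2$ gives the same conclusion with $P_1$ and $P_2$ interchanged, accounting for the ``or vice versa''.

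I expect the location dichotomy of the second paragraph to be the main obstacle. Everything downstream is the by-now-routine passage from intertwining to conjugacy for commuting tensor factors; the genuinely delicate point is to produce the first intertwining $\ms_1\prec_M P_i$ from the bare hypothesis $M=\ms_1\tp\ms_2$. This requires checking that the structural property defining $\mathscr S\cup\mathscr V$ is stable under the direct product $\G_1\times\G_2$ and that it delivers the dichotomy above, with property (T) available to discard the relative-amenability branch; ruling out the diagonal embeddings that would otherwise break uniqueness of the prime factorization is the crux.
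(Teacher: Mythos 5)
Your overall architecture --- locate one abstract tensor factor into one canonical factor via Popa's intertwining, then upgrade to unitary conjugacy after amplification --- is the right one and matches the paper's strategy in broad outline. The downstream half is essentially correct: once $\ms_1\prec_{\emm}\El(\G_1)$ is in hand, \cite[Proposition 12]{OP03} gives $\ms_1\subseteq u\El(\G_1)^s u^*$ for some $s>0$ and unitary $u$, Ge's theorem \cite{Ge} writes $u\El(\G_1)^s u^*=\ms_1\tp\mathcal Q$ with $\mathcal Q\subseteq\ms_2$, and primeness of $\El(\G_1)$ (established in the paper's Lemmas 2.2 and 2.4) forces $\mathcal Q$ to be finite dimensional, whence the conclusion by taking relative commutants. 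Note that primeness enters at this final stage, not in the dichotomy as you suggest.

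The genuine gap is the ``location dichotomy'' itself, which you assert in the form ``either $\ms_1\prec_{\emm} P_i$ or $\ms_1\lessdot_{\emm} P_i$ for some $i$'' and attribute to a bi-exactness-type control from \cite{CDHK20}. No such dichotomy relative to the tensor factors $\El(\G_1)$, $\El(\G_2)$ is available off the shelf: the groups in $\mathscr S$ are Rips-type semidirect products $(N_1\times N_2)\rtimes Q$ and are not bi-exact, and the tools that do apply (\cite[Theorem 1.4]{PV12} for crossed products by hyperbolic groups, \cite[Theorem 5.3]{CDK19} for commuting property (T) subfactors of products of relatively hyperbolic groups) locate $\ms_i$ only relative to the pieces of the crossed-product or relatively hyperbolic structure --- e.g.\ into $\El(\hat{G_k})$ or $\El(\hat{G_k}\times H_k)$ inside an ambient algebra $\tilde{\emm}=\El(G_1\times G_2\times G_3\times G_4)$ --- and these pieces cut diagonally across the splitting $\El(\G_1)\tp\El(\G_2)$. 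The paper's main technical work (Theorem 3.4) is precisely the conversion of those misaligned locations into $\ms_i\prec_{\emm}\El(\G_1)$: one must rule out the alternative $\ms_1\vee\ms_2\prec_{\tilde{\emm}}\El(\hat{G_k}\times H_k)$ by an infinite-index subgroup-intersection computation, show that the same index $i$ works for the different coordinates $k$, intersect the two resulting targets $\El(\G_1)\tp\El(M_1)$ and $\El(\G_1)\tp\El(M_2)$ using \cite[Lemma 2.8(2)]{DHI16} after upgrading to $\prec^s$, and finally descend from $\tilde{\emm}$ back to $\emm$ via \cite[Lemma 2.6]{CDK19}. You correctly flag this step as ``the main obstacle,'' but the proposal neither supplies a mechanism for it nor identifies the correct intermediate targets, so as written the proof does not go through.
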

The study of unique prime factorization and its use in calculation of fundamental groups of $\rm II_1$ factors was pioneered by Ozawa and Popa in \cite{OP03}. In fact, \cite[Proposition 12]{OP03} remains one of the main tools for establishing UPF results till date. Unique prime factorization was further studied in \cite{DHI16, CdSS17}. The relation of calculation of fundamental groups and UPF was also explored in the beautiful paper by Isono \cite{Is20}. The aforementioned papers provide valuable insight into the study of unique prime factorization results, and the author owes an intellectual debt to these papers. 
\vskip 0.02 in
Finally we mention that we prove two other UPF results. In Theorem ~\ref{resfinupf} we establish a unique prime factorization result for direct products of i.c.c., property (T) groups, that are hyperbolic relative to infinite residually finite subgroups. The other UPF result, Theorem ~\ref{upf}, establishes unique prime factorization of tensor product of a prime full factor with a group von Neumann algebra of an icc,hyperbolic group.
\vskip 0.01 in
As a final remark we note that since the group factors in Theorem B, and Theorem ~\ref{resfinupf} have property (T), these Theorems provide further evidence towards Popa's conjecture on prime decomposition of property (T) factors (second last problem on page 9 in \cite{Po13}; see also Problem S.4 in Peterson's problem list \cite{PProb}).
	\section{Preliminaries}
	\subsection{Notations and Terminology}
	
	\vskip 0.05in	
	Throughout this document all von Neumann algebras are denoted by calligraphic letters e.g.\ $\mathcal A$, $\mathcal B$, $\mathcal M$, $\mathcal N$, etc. Given a von Neumann algebra $\mathcal M$ we will denote by $\mathscr U(\mathcal M)$ its unitary group, by $\mathscr P(\mathcal M)$ the set of all its nonzero projections and by $(\mathcal M)_1$ its unit ball.  Given a unital inclusion $\mathcal N\subseteq \mathcal M$ of von Neumann algebras we denote by $\mathcal N'\cap \mathcal M =\{ x\in \emm \,:\, [x, \enn]=0\}$.  We also denote by $\mathscr N_\emm(\enn)=\{ u\in \mathscr U(\emm)\,:\, u\enn u^*=\enn\}$ the normalizing group. We also denote the quasinormalizer of $\enn$ in $\emm$ by $\mathscr {QN}_{\mathcal M}(\mathcal N)$. Recall that $\mathscr {QN}_{\mathcal M}(\mathcal N)$ is the set of all $x\in\mathcal M$ for which there exist $x_1,x_2,...,x_n \in \mathcal M$ such that $\mathcal N x\subseteq \sum_i x_i \mathcal N$ and $x\mathcal N \subseteq \sum_i  \mathcal N x_i$ (see \cite[Definition 4.8]{Po99}).
	\vskip 0.03in
	All von Neumann algebras $\emm$ considered in this document will be tracial, i.e.\ endowed with a unital, faithful, normal linear functional $\tau:M\rightarrow \mathbb C$  satisfying $\tau(xy)=\tau(yx)$ for all $x,y\in \emm$. This induces a norm on $\emm$ by the formula $\|x\|_2=\tau(x^*x)^{1/2}$ for all $x\in \emm$. The $\|\cdot\|_2$-completion of $\emm$ will be denoted by $L^2(\emm)$.  For any von Neumann subalgebra $\mathcal N\subseteq \mathcal M$ we denote by $E_{\mathcal N}:\mathcal M\rightarrow \mathcal N$ the $\tau$-preserving condition expectation onto $\mathcal N$. We denote the orthogonal projection from $L^2(\emm) \rightarrow L^2(\enn)$ by $e_{\enn}$. The Jones' basic construction \cite[Section 3]{Jo83} for $\enn \subseteq \emm$ will be denoted by $\langle \emm, e_{\enn} \rangle$.  
	\vskip 0.05in
	For any group $G$ we denote by $(u_g)_{g\in G} \subset \mathscr U(\ell^2G)$ its left regular representation, i.e.\ $u_g(\delta_h ) = \delta_{gh}$ where $\delta_h:G\rightarrow \mathbb C$ is the Dirac function at $\{h\}$. The weak operatorial closure of the linear span of $\{ u_g\,:\, g\in G \}$ in $\mathscr B(\ell^2 G)$ is called the group von Neumann algebra and will be denoted by $\El(G)$; this is a II$_1$ factor precisely when $G$ has infinite non-trivial conjugacy classes (icc). If $\mathcal M$ is a tracial von Neumann algebra and $G \ca^\sigma \mathcal M$ is a trace preserving action we denote by $\mathcal M \rtimes_\sigma G$ the corresponding cross product von Neumann algebra \cite{MvN37}. For any subset $K\subseteq G$ we denote by $P_{\mathcal M K}$  the orthogonal projection from the Hilbert space $L^2(\mathcal M \rtimes G)$ onto the closed linear span of $\{x u_g \,|\, x\in \mathcal M, g\in K\}$. When $\mathcal M$ is trivial we will denote this simply by $P_K$.  
	\vskip 0.05in
	All groups considered in this article are countable and will be denoted by capital letters $A$, $B$, $G$, $H$, $Q$, $N$ ,$M$,  etc. Given groups $Q$, $N$ and an action $Q\curvearrowright^{\sg} N$ by automorphisms we denote by $N\rtimes_\sigma Q$ the corresponding semidirect product group. For any $n\in N$ we denote by ${\rm Stab}_Q(n)=\{ g\in Q\,:\, \sigma_g(n)=n\}$. Given a group inclusion $H \leqslant G$ sometimes we consider the centralizer $C_G(H)$ and the  virtual centralizer $vC_G(H)=\{g\in G \,:\, |g^{H}|<\infty\} $.

	\subsection{Popa's Intertwining Techniques} Over more than fifteen years ago, Popa  introduced  in \cite [Theorem 2.1 and Corollary 2.3]{Po03} a powerful analytic criterion for identifying intertwiners between arbitrary subalgebras of tracial von Neumann algebras. Now this is known in the literature  as \emph{Popa's intertwining-by-bimodules technique} and has played a key role in the classification of von Neumann algebras program via Popa's deformation/rigidity theory.

	\begin {theorem}\cite{Po03} \label{corner} Let $(\mathcal M,\tau)$ be a separable tracial von Neumann algebra and let $\mathcal P, \mathcal Q\subseteq \mathcal M$ be (not necessarily unital) von Neumann subalgebras. 
	Then the following are equivalent:
	\begin{enumerate}
		\item There exist $ p\in  \mathscr P(\mathcal P), q\in  \mathscr P(\mathcal Q)$, a $\ast$-homomorphism $\theta:p \mathcal P p\rightarrow q\mathcal Q q$  and a partial isometry $0\neq v\in q \mathcal M p$ such that $\theta(x)v=vx$, for all $x\in p \mathcal P p$.
		\item For any group $\mathcal G\subset \mathscr U(\mathcal P)$ such that $\mathcal G''= \mathcal P$ there is no sequence $(u_n)_n\subset \mathcal G$ satisfying $\|E_{ \mathcal Q}(xu_ny)\|_2\rightarrow 0$, for all $x,y\in \mathcal  M$.
		\item There exist finitely many $x_i, y_i \in \mathcal M$ and $C>0$ such that  $\sum_i\|E_{ \mathcal Q}(x_i u y_i)\|^2_2\geq C$ for all $u\in \mathcal U(\mathcal P)$.
	\end{enumerate}
\end{theorem}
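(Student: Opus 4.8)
The strategy is to pass to the Jones basic construction $\langle \mathcal M, e_{\mathcal Q}\rangle$, equipped with its canonical semifinite faithful normal trace $\operatorname{Tr}$ determined by $\operatorname{Tr}(a e_{\mathcal Q} b)=\tau(ab)$ for $a,b\in\mathcal M$, and to prove the cyclic chain $(2)\Rightarrow(3)\Rightarrow(1)\Rightarrow(2)$. The basic identity I will use throughout is $\|E_{\mathcal Q}(a)\|_2^2=\operatorname{Tr}(e_{\mathcal Q}a e_{\mathcal Q}a^*)$, which recasts the quantities in (2) and (3) as Hilbert--Schmidt pairings $\|E_{\mathcal Q}(xuy)\|_2^2=\langle u\,(ye_{\mathcal Q}y^*)\,u^*,\,x^*e_{\mathcal Q}x\rangle_{\operatorname{Tr}}$ inside $L^2(\langle\mathcal M,e_{\mathcal Q}\rangle,\operatorname{Tr})$, on which $\mathscr U(\mathcal P)$ acts by the isometric conjugation $T\mapsto uTu^*$.

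For $(2)\Rightarrow(3)$ I argue by contraposition. The negation of (3) means that for every finite family $x_1,y_1,\dots,x_m,y_m\in\mathcal M$ and every $C>0$ there is $u\in\mathscr U(\mathcal P)$ with $\sum_i\|E_{\mathcal Q}(x_iuy_i)\|_2^2<C$. Using separability of $L^2(\mathcal M)$ I fix a countable $\|\cdot\|_2$-dense subset $\{a_k\}\subset(\mathcal M)_1$ and, for each $n$, apply this to the family $\{(a_j,a_l):j,l\le n\}$ with $C=1/n$, producing $u_n\in\mathscr U(\mathcal P)$. A routine approximation (using that $E_{\mathcal Q}$ is $\|\cdot\|_2$-contractive and the $u_n$ are unitaries) then gives $\|E_{\mathcal Q}(xu_ny)\|_2\to0$ for all $x,y\in\mathcal M$. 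Since $\mathscr U(\mathcal P)''=\mathcal P$, the group $\mathcal G=\mathscr U(\mathcal P)$ is admissible in (2), so this sequence witnesses the failure of (2).

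The heart of the matter is $(3)\Rightarrow(1)$. Fixing the family and constant from (3), I encode the finite family by passing to the amplification $\langle\mathcal M,e_{\mathcal Q}\rangle\,\bar\otimes\,M_m(\mathbb C)$, where the sandwiched sum $\sum_i\|E_{\mathcal Q}(x_iuy_i)\|_2^2$ becomes the pairing of the conjugate $(u\otimes1)\,\zeta\,(u\otimes1)^*$ of a single fixed positive Hilbert--Schmidt element $\zeta$ against another fixed element, and (3) says this pairing stays $\ge C$ for every $u\in\mathscr U(\mathcal P)$. Consequently the weakly compact convex set $\mathcal C=\overline{\operatorname{conv}}^{\,w}\{(u\otimes1)\zeta(u\otimes1)^*:u\in\mathscr U(\mathcal P)\}$ does not contain $0$. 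Its unique element $h_0$ of minimal $\|\cdot\|_{2,\operatorname{Tr}}$-norm is therefore nonzero, and by uniqueness it is fixed by the conjugation action, hence $\mathscr U(\mathcal P)$-central; compressing to a diagonal corner yields a nonzero positive $h\in\mathcal P'\cap\langle\mathcal M,e_{\mathcal Q}\rangle$ with $\operatorname{Tr}(h)<\infty$. Cutting down by a spectral projection $f=\mathbf 1_{[s,\infty)}(h)$ for small $s>0$ produces a nonzero $\mathcal P$-central projection $f$ of finite trace. The final, most technical step is to extract the intertwiner from $f$: a finite-trace projection in the basic construction has range of finite right $\mathcal Q$-dimension, so $fL^2(\mathcal M)$ is a nonzero $\mathcal P$--$\mathcal Q$ subbimodule finitely generated over $\mathcal Q$; choosing a bounded vector in it and polar-decomposing the associated bounded $\mathcal Q$-linear map yields the projections $p,q$, the $\ast$-homomorphism $\theta:p\mathcal Pp\to q\mathcal Qq$ and the partial isometry $v\in q\mathcal Mp$ of (1). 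This passage from a central finite-trace projection to an honest intertwiner, via bounded vectors and Pimsner--Popa bases in $\langle\mathcal M,e_{\mathcal Q}\rangle$, is where the real content lies and is the step I expect to be the main obstacle.

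Finally, for $(1)\Rightarrow(2)$, from an intertwiner $(p,q,\theta,v)$ the relation $\theta(x)v=vx$ for $x\in p\mathcal Pp$ (which gives $yv^*=v^*\theta(y)$ by adjoining) shows that $d:=v^*e_{\mathcal Q}v$ is a nonzero positive element of $(p\mathcal Pp)'\cap\langle\mathcal M,e_{\mathcal Q}\rangle$ with $\operatorname{Tr}(d)=\tau(v^*v)>0$. Suppose, for contradiction, that some admissible $\mathcal G$ contained a sequence $u_n$ with $\|E_{\mathcal Q}(xu_ny)\|_2\to0$ for all $x,y\in\mathcal M$. Approximating $d$ in $\|\cdot\|_{2,\operatorname{Tr}}$ by finite sums $\sum_j a_je_{\mathcal Q}b_j$ and expanding, one checks termwise (using the displayed identity) that $\langle u_ndu_n^*,d\rangle_{\operatorname{Tr}}\to0$; but centrality forces $\langle u_ndu_n^*,d\rangle_{\operatorname{Tr}}=\|d\|_{2,\operatorname{Tr}}^2>0$ for all $n$, a contradiction. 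The reduction to the corner $p$, i.e.\ replacing $\mathcal P$ by $p\mathcal Pp$, is handled by a standard amplification and does not affect the argument. This establishes (2) and closes the cycle.
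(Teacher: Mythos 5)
The paper does not prove this statement at all: Theorem \ref{corner} is quoted verbatim from Popa's work \cite{Po03} as background for the intertwining-by-bimodules technique, so there is no in-paper argument to compare yours against. Judged on its own, your outline is the standard one (basic construction, Hilbert--Schmidt pairing, minimal-norm element of a $\mathscr U(\mathcal P)$-invariant convex set, extraction of an intertwiner from a finite-trace projection), and the steps $(2)\Rightarrow(3)$ and the convex-hull portion of $(3)\Rightarrow(1)$ are sound. But two gaps remain, one of which is a genuine error as written.

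First, the passage in $(3)\Rightarrow(1)$ from the nonzero finite-trace projection $f\in\mathcal P'\cap\langle\mathcal M,e_{\mathcal Q}\rangle$ to the tuple $(p,q,\theta,v)$ is only named, not carried out; you correctly identify it as the technical heart (bounded vectors in the bimodule $fL^2(\mathcal M)$, Pimsner--Popa bases, polar decomposition), but as submitted the proof is incomplete there. Second, and more seriously, your $(1)\Rightarrow(2)$ step contains a false assertion: $d=v^*e_{\mathcal Q}v$ commutes with $p\mathcal Pp$ only, while the sequence $(u_n)$ lives in a generating group $\mathcal G\subset\mathscr U(\mathcal P)$; since $u_n$ need not commute with $d$, the claimed identity $\langle u_ndu_n^*,d\rangle_{\operatorname{Tr}}=\|d\|_{2,\operatorname{Tr}}^2$ does not hold, and the parenthetical ``handled by a standard amplification'' is precisely where the real work lies. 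Note that generating a $\mathcal P$-$\mathcal Q$ bimodule from the $p\mathcal Pp$-$\mathcal Q$ bimodule by summing over partial isometries $w_k\in\mathcal P$ with $w_k^*w_k\le p$ and $\sum_k w_kw_k^*=z$ (the central support of $p$) can destroy finiteness of the trace, so one cannot simply ``amplify.'' The standard fix is to prove $(1)\Rightarrow(3)$ directly: test against the finitely many elements $vw_k^*$ and $w_lv^*$ for finitely many such $w_k$ covering a central projection $z_n$ with $\tau(z_n)>\tau(z)/2$, use $\theta(w_k^*uw_l)E_{\mathcal Q}(vv^*)=E_{\mathcal Q}(vw_k^*uw_lv^*)$ together with an overlap estimate for the two projections $z_n$ and $u^*z_nu$ to get a uniform lower bound over $u\in\mathscr U(\mathcal P)$; the genuinely $\mathcal P$-central finite-trace element needed to refute a decay sequence then comes out of the convex-hull construction applied to these test elements, not out of $d$ itself. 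You should either reroute the cycle as $(2)\Rightarrow(3)\Rightarrow(1)\Rightarrow(3)\Rightarrow(2)$ with this argument, or supply the overlap lemma explicitly.
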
 
\vskip 0.02in
\noindent If one of the three equivalent conditions from Theorem \ref{corner} holds then we say that \emph{ a corner of $\mathcal P$ embeds into $\mathcal Q$ inside $\mathcal M$}, and write $\mathcal P\prec_{\mathcal M}\mathcal Q$. If we moreover have that $\mathcal P p'\prec_{\mathcal M}\mathcal Q$, for any projection  $0\neq p'\in \mathcal P'\cap 1_{\mathcal P} \mathcal M 1_{\mathcal P}$ (equivalently, for any projection $0\neq p'\in\mathscr Z(\mathcal P'\cap 1_{\mathcal P}  \mathcal M 1_{P})$), then we write $\mathcal P\prec_{\mathcal M}^{s}\mathcal Q$. We refer the readers to the survey papers \cite{Po07, Va10icm, Io18icm} for recent progress in von Neumann algebras using deformation/rigidity theory.
\vskip 0.02in
We also recall the notion of relative amenability introduced by N. Ozawa and S. Popa. Let $(\emm, \tau) $ be a tracial von Neumann algebra. Let $p \in \emm$ be a projection, and let $\mathcal P \subseteq p \emm p$, and $\mathcal Q \subseteq \emm$ be von Neumann subalgebras. Following \cite[Definition 2.2]{OP07}, we say that $\mathcal P$ is amenable relative to $\mathcal Q$ inside $\emm$, if there exists a positive linear functional $\phi: p \langle \emm, e_{\mathcal Q} \rangle p \rightarrow \mc$ such that $\phi|_{p \emm p}= \tau$ and $\phi(xT)=\phi(Tx)$ for all $T \in \mathcal Q$ and all $x \in \mathcal P$. If $\mathcal P$ is amenable relative to $\mathcal Q$ inside $\emm$, we write $\mathcal P \lessdot_{\emm} \mathcal Q$.

\subsection{Class $\mathscr S$ and $\mathscr V$} \label{grnot}
We now briefly recall the classes of groups, Class $\mathscr S$ and $\mathscr V$, that were introduced in \cite{CDHK20}, and triviality of fundamental group for factors arising from these classes of groups was established. We refer the reader to \cite[Section 3 ]{CDHK20} for more details about these groups. We shall follow a bit different notation from that of \cite{CDHK20}, and hence we explain the notation used in this paper below.
\vskip 0.02 in
\textbf{Class $\mathscr V$:} Denote by $\mathbb H$ the division algebra of quaternions and by $\mathbb H_{\mathbb Z}$ its lattice of integer points. Let $n \geq 2$. Recall that $\Sp(n, 1)$ is the rank one connected simple real Lie group defined by 
$$\Sp(n, 1) = \left\{ A \in \GL_{n+ 1}(\mathbb H) \mid A^* J A = J\right\}$$
where $J = \Diag(1, \dots, 1, -1)$.  Since the subgroup $\Sp(n, 1)$ is the set of real points of an algebraic $\mathbb Q$-group, the group of integer points $\Lambda_n = \Sp(n, 1)_{\mathbb Z}$ is a lattice in $\Sp(n, 1)$ by Borel--Harish-Chandra's result \cite{BHC61}. Observe that $\Sp(n, 1)$ acts linearly on $\mathbb H^{n + 1} \cong \mathbb R^{4(n + 1)}$ in such a way that $\Lambda_n$ preserves $(\mathbb H_{\mathbb Z})^{n+1} \cong \mathbb Z^{4(n + 1)}$. For every $n \geq 2$, consider the natural semidirect product $G_n =  \mathbb Z^{4(n + 1)} \rtimes \Lambda_n$. 
\vskip 0.02 in
\noindent Throughout this document we denote by $\mathscr V_1$ the class of groups $G_n=\mathbb Z^{4(n + 1)} \rtimes \Lambda_n$, with $n \geq 2$. We also denote by $\mathscr V_m$ the class of all $m$-fold products of groups in $\mathscr V_1$. Finally we let $\mathscr V= \cup_m \mathscr V_m$, the class of all finite direct products of groups in $\mathscr V_1$.
\vskip 0.02 in
It's easy to see that $\El(\G)$ is prime if $\G \in \mathscr V_1$. Nevertheless, we include a short sketch of proof using \cite{PV12} for readers' convenience.
\begin{lemma}
	Let $\G \in \mathscr V_1$. Then $\El(\G)$ is prime.
\end{lemma}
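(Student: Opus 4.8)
The plan is to realise $\El(\G)$ as a crossed product and run the dichotomy of Popa and Vaes \cite{PV12}. Writing $\G = \mathbb Z^{4(n+1)} \rtimes \La_n$ and setting $\Aa := \El(\mathbb Z^{4(n+1)})$, the normality of $\mathbb Z^{4(n+1)}$ in $\G$ gives
$$\El(\G) = \Aa \rtimes \La_n,$$
where $\Aa$ is abelian, hence amenable, and the action is the one dual to the linear $\La_n$-action. The point is that $\La_n$, being a lattice in the rank one simple Lie group $\Sp(n, 1)$, is bi-exact (it is hyperbolic relative to its virtually nilpotent cusp subgroups) and weakly amenable (since $\Sp(n, 1)$ has the complete metric approximation property, by Cowling--Haagerup). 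These are exactly the hypotheses under which \cite{PV12} governs the subalgebra structure of $\Aa \rtimes \La_n$.

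Assume towards a contradiction that $\El(\G)$ is not prime and fix $\El(\G) = \ms_1 \tp \ms_2$ with $\ms_1, \ms_2$ both diffuse $\rm II_1$ factors. I would first record the elementary obstruction that neither factor can intertwine into $\Aa$. Indeed, if $\ms_i \prec_{\El(\G)} \Aa$, then by Theorem~\ref{corner}(1) there is a nonzero $\ast$-homomorphism $\theta$ from a corner $p \ms_i p$ into a corner of the abelian algebra $\Aa$. Since $p \ms_i p$ is again a $\rm II_1$ factor, $\ker \theta$ is a weakly closed two-sided ideal and hence trivial, so $\theta$ is injective and $p \ms_i p$ would embed into an abelian algebra --- impossible. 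Thus $\ms_1 \not\prec_{\El(\G)} \Aa$ and $\ms_2 \not\prec_{\El(\G)} \Aa$.

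Next I would apply the Popa--Vaes dichotomy \cite{PV12} twice. In its relative solidity form, applied to $\ms_1$, either $\ms_1 \prec_{\El(\G)} \Aa$ or the relative commutant $\ms_1' \cap \El(\G) = \ms_2$ is amenable relative to $\Aa$; the first option being excluded, we get $\ms_2 \lessdot_{\El(\G)} \Aa$. Feeding $\ms_2$ into the normalizer form of the dichotomy, either $\ms_2 \prec_{\El(\G)} \Aa$ (again excluded) or $\mathscr N_{\El(\G)}(\ms_2)''$ is amenable relative to $\Aa$. But $\ms_1$ commutes with $\ms_2$, so every unitary of $\ms_1$ normalises $\ms_2$ and therefore $\mathscr N_{\El(\G)}(\ms_2)'' \supseteq \ms_1 \vee \ms_2 = \El(\G)$. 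Hence $\El(\G) \lessdot_{\El(\G)} \Aa$, which for a crossed product by $\La_n$ is equivalent to amenability of $\La_n$. This contradicts the fact that $\La_n$, as a lattice in $\Sp(n, 1)$ with $n \geq 2$, has property (T) and is in particular non-amenable, and the contradiction shows $\El(\G)$ is prime.

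I expect the only genuinely delicate ingredient to be the verification that $\La_n$ satisfies the hypotheses of \cite{PV12} --- that a possibly non-uniform lattice in $\Sp(n, 1)$ is both bi-exact and weakly amenable; the intertwining-into-abelian obstruction and the relative-amenability bookkeeping are routine, and the final identification of relative amenability of the crossed product with amenability of the acting group is standard.
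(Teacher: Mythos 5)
Your proposal is correct and takes essentially the same route as the paper: the same crossed-product decomposition $\El(\G)=\Aa\rtimes\Lambda_n$ fed into the Popa--Vaes dichotomy of \cite{PV12}, the only real difference being that the paper kills both branches at once using property (T) of the tensor factors (a corner of a property (T) $\rm II_1$ factor cannot intertwine into the amenable $\Aa$), whereas you route through relative amenability of all of $\El(\G)$ over $\Aa$ and the non-amenability of $\Lambda_n$. Two small caveats: the ``relative solidity form'' you attribute to \cite{PV12} is not literally one of their stated theorems (Theorems 1.1 and 1.4 there are normalizer statements for subalgebras that are already amenable relative to $\Aa$), so to stay within the cited results you should run the first step by applying the normalizer theorem to diffuse amenable subalgebras $\bee\subseteq\ms_1$ and upgrading via \cite[Corollary F.14]{BO08} --- exactly as the paper does in the expanded argument of Proposition~\ref{upf2ind} --- after which your chain of implications goes through unchanged; and your insistence on the bi-exact/weakly amenable version (Theorem 1.4 of \cite{PV12}) rather than the hyperbolic one is well taken, since the lattice $\Sp(n,1)_{\mathbb Z}$ need not be hyperbolic.
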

\begin{proof}
	Suppose $\El(\G)=\mathcal A \rtimes H=\pee_1 \tp \pee_2$, where $\pee_1$ and $\pee_2$ are $\rm II_1$ factors, $\mathcal A= \El(\mathbb Z^{4(n + 1)})$, and $H= \Lambda_n$ is hyperbolic. Using \cite[Theorem 1.1]{PV12}, we get that either $\pee_1 \prec_{\El(\G)} \mathcal A$, or $\pee_2 \prec_{\El(\G)} \mathcal A$, which is impossible, as both $\pee_1$ and $\pee_2$ are property (T) factors.
\end{proof}
\textbf{Class $\mathscr S$:} Consider any product group $Q= Q_1\times Q_2$, where $Q_i$ are any nontrivial, bi-exact, weakly amenable, property (T), residually finite, torsion free, icc groups. Then for every $i=1,2$  consider a Rips construction  $G_i = N_i \rtimes_{\rho_i} Q\in \mathcal Rips(Q)$, let $N=N_1\times N_2$  and denote by $G= N\rtimes_\sigma Q$ the canonical semidirect product which arises from the diagonal action  $\sigma=\rho_1\times \rho_2: Q\rar {\rm Aut}(N)$, i.e. $\sigma_g (n_1,n_2)=( (\rho_1)_g(n_1), (\rho_2)_g(n_2))$ for all $(n_1,n_2)\in N$. Throughout this article the category of all these  semidirect products $G$ will denoted by  Class $\mathscr S$.
\vskip 0.02 in
We end this section by showing that $\El(\G)$ is prime if $\G \in \mathscr S$. Note that this result can be deduced from \cite{CKP14}. We shall provide a very short proof of this fact, using techniques from \cite{CDHK20} for readers' convenience.
\begin{lemma}
	Let $\G \in \mathscr S$. Then $\El(\G)$ is prime.
\end{lemma}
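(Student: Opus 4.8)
The plan is to run the same strategy as in the previous lemma, now exploiting the product structure $Q=Q_1\times Q_2$ of the acting group together with the bi-exactness of each $Q_i$. Write $\El(\G)=\Aa\rtimes Q$, where $\Aa=\El(N)$ and $N=N_1\times N_2$, and recall that, being a member of $\mathscr S$, the group $\G$ has property (T); hence so does $\El(\G)$. Suppose toward a contradiction that $\El(\G)=\pee_1\tp\pee_2$ with $\pee_1,\pee_2$ being $\rm II_1$ factors. Since property (T) passes to tensor factors, both $\pee_1$ and $\pee_2$ have property (T).

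First I would locate the two factors via a bi-exactness dichotomy. Regarding $Q=Q_1\times Q_2$ as bi-exact relative to the family $\{Q_1,Q_2\}$ (each $Q_i$ being bi-exact and weakly amenable), the Ozawa--Popa/Popa--Vaes machinery (in the spirit of \cite{OP03,PV12}) applied to the subalgebra $\pee_1\subseteq\El(\G)$ yields the following dichotomy: either $\pee_1\prec_{\El(\G)}\Aa\rtimes Q_j$ for some $j\in\{1,2\}$, or the normaliser $\mathcal N_{\El(\G)}(\pee_1)''$ is amenable relative to $\Aa$ inside $\El(\G)$.

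Next I would rule out the relative-amenability alternative. Because $\pee_1$ is a tensor factor, $\pee_2$ commutes with $\pee_1$ and hence normalises it, so $\mathcal N_{\El(\G)}(\pee_1)''=\pee_1\vee\pee_2=\El(\G)$. The amenability alternative would therefore give $\El(\G)\lessdot_{\El(\G)}\Aa$, i.e.\ the crossed product $\Aa\rtimes Q$ is amenable relative to its base $\Aa$; by the standard characterisation of relative amenability of a crossed product over its base this forces $Q$ to be amenable. This is impossible, since $Q$ contains the infinite property (T) group $Q_1$ and is therefore non-amenable. Hence $\pee_1\prec_{\El(\G)}\Aa\rtimes Q_j$ for some $j$, and, running the same argument with the roles of $\pee_1$ and $\pee_2$ interchanged, $\pee_2\prec_{\El(\G)}\Aa\rtimes Q_k$ for some $k$.

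The main obstacle is the final step: converting these two intertwinings into a contradiction. Here I would use that $\Aa\rtimes Q_1$ and $\Aa\rtimes Q_2$ form a commuting square over $\Aa=\El(N)$ inside $\El(\G)=\Aa\rtimes Q$, together with the malnormality/properness built into the Rips constructions $G_i\in\mathcal Rips(Q)$, which controls the relevant virtual centralisers and forces $\Aa'\cap\El(\G)$ to be trivial. Upgrading the intertwinings to the strong form $\prec^{s}$ and passing to relative commutants in the manner of Popa, the commuting pair $\pee_1,\pee_2$ would then be squeezed into a single corner $\Aa\rtimes Q_j$ of infinite index, contradicting $\pee_1\vee\pee_2=\El(\G)$ and the diagonal entanglement of $N_1,N_2$ that makes $\G$ indecomposable at the group level. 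I expect this last step, namely making the commuting-square and commutant-intertwining bookkeeping precise using the structural properties of Class $\mathscr S$ established in \cite{CDHK20}, to be where the real work lies.
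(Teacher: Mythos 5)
Your proof is not complete, and the missing step is exactly the hard part. You correctly reduce (modulo a somewhat loosely cited bi-exactness dichotomy for $Q=Q_1\times Q_2$ relative to $\{Q_1,Q_2\}$, and a correct dismissal of the relative-amenability branch via non-amenability of $Q$) to the situation $\pee_1\prec_{\El(\G)}\Aa\rtimes Q_j$ and $\pee_2\prec_{\El(\G)}\Aa\rtimes Q_k$. But you then stop, conceding that ``the real work lies'' in the final contradiction. That concession is accurate: if $j\neq k$, the two intertwinings into the non-commuting, infinite-index subalgebras $\Aa\rtimes Q_1$ and $\Aa\rtimes Q_2$ (which intersect in all of $\Aa$) do not obviously conflict with $\pee_1\vee\pee_2=\El(\G)$, and the ``commuting square plus malnormality'' sketch is a heuristic, not an argument. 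Upgrading to $\prec^s$, intersecting targets, and controlling relative commutants is precisely the kind of bookkeeping that occupies the genuine proofs of such statements (compare the proof of Theorem~\ref{upfclassS} in this paper), so the lemma cannot be considered proved from what you wrote.

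For comparison, the paper's route avoids this entirely: it does not touch the product structure of $Q$ at all. It observes that $\pee_1,\pee_2$ are commuting property (T) subfactors of $\emm=\El((N_1\times N_2)\rtimes Q)$ with $(\pee_1\vee\pee_2)'\cap\emm=\mc$, and applies \cite[Theorem 4.3]{CDHK20} (which exploits the relative hyperbolicity of the Rips pieces $N_i\rtimes Q$ over $Q$) to the \emph{join} $\pee_1\vee\pee_2$, concluding that $\pee_1\vee\pee_2\prec^s_{\emm}\El(N_1\times N_2)$ or $\pee_1\vee\pee_2\prec^s_{\emm}\El(Q)$. Since the join is all of $\El(\G)$ and both targets correspond to infinite-index subgroups, either alternative is immediately absurd, with no case analysis on where each tensor factor lands. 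If you want to salvage your approach, you would need to either prove the $j\neq k$ case is impossible by a DHI16-style strong-intertwining argument, or switch to a dichotomy that, like \cite[Theorem 4.3]{CDHK20}, constrains the join rather than the individual factors.
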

\begin{proof}
		Suppose $\El(\G)=\mathcal L((N_1 \times N_2) \rtimes Q)=\pee_1 \tp \pee_2$. 
			 Since $\pee_1$ and $\pee_2$ are commuting property (T) subfactors of $\emm$ such that $(\pee_1 \vee \pee_2)' \cap \emm= \mc$, using \cite[Theorem 4.3]{CDHK20}, we get that\begin{enumerate}
			\item[a)] Either $\pee_1 \vee \pee_2 \prec_{\emm}^s  \El(N_1 \times N_2)$ , or 
			\item [b)] $\pee_1 \vee \pee_2 \prec_{\emm}^s  \El(Q)$.
		\end{enumerate}
			Note that as $\pee_1 \vee \pee_2= \El(\G)$, both cases (a) and (b) above lead to a contradiction. Hence $\El(\G)$ is prime, if $\G \in \mathscr S$.  
		
\end{proof}

\section{Unique Prime Factorization and Fundamental Group Calculations}
In this section we provide new examples of property (T) group factors with trivial fundamental group. Our examples arise as direct product groups $\G= \G_1 \times \G_2$, where $\G_i \in \mathscr S \cup \mathscr V$. Class $\mathscr V$ was introduced in \cite{CDHK20}, while Class $\mathscr S$ was introduced in \cite{CDK19}. Throughout this section we shall use the notations as in Section ~\ref{grnot}. Note that the case when both $\G_1$ and $\G_2$ belong to Class $\mathscr V$ was considered already in \cite[Theorem 5.2]{CDHK20}, and hence we shall only focus on the remaining cases. The result on the calculation of fundamental group shall be obtained as a corollary to unique prime factorization results for $\El(\G)$, in conjunction with results from \cite{CDHK20}. 
\vskip 0.02 in
We begin by establishing UPF for product of groups in $\mathscr S$ and $\mathscr V$.
\begin{proposition} \label{upf2ind}
	Let $\G \in \mathscr S$ and $\La \in \mathscr V_1$ be as in Section ~\ref{grnot}. Then $\El(\G \times \La)$ admits a Unique Prime Factorization.
\end{proposition}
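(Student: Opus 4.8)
The plan is to prove that the canonical factorization $\emm := \El(\G\times\La) = \El(\G)\tp\El(\La)$ is, up to amplification and unitary conjugacy, the only decomposition of $\emm$ into a tensor product of prime factors. By the two lemmas above both $\El(\G)$ and $\El(\La)$ are prime, and since $\G\times\La$ is a direct product of two property (T) groups, $\emm$ has property (T); as property (T) passes to tensor factors, in any decomposition $\emm = \ms_1\tp\ms_2$ into $\rm II_1$ factors both $\ms_1$ and $\ms_2$ again have property (T), and they form a commuting pair with $(\ms_1\vee\ms_2)'\cap\emm = \mc$. The key structural observation is that, writing $\G = (N_1\times N_2)\rtimes(Q_1\times Q_2)$ and $\La = \mathbb Z^{4(n+1)}\rtimes\Lambda_n$, we have $\G\times\La = \mathcal K\rtimes\mathcal Q$ where $\mathcal K = N_1\times N_2\times\mathbb Z^{4(n+1)}$ and $\mathcal Q = Q_1\times Q_2\times\Lambda_n$ is a direct product of three nontrivial, bi-exact, weakly amenable groups ($Q_1,Q_2$ by hypothesis and $\Lambda_n$ because it is a hyperbolic lattice in $\Sp(n,1)$). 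Following the Ozawa--Popa strategy, I would reduce the whole statement to producing a single intertwining between the arbitrary pair $\{\ms_1,\ms_2\}$ and the canonical pair $\{\El(\G),\El(\La)\}$, and then feed it into \cite[Proposition 12]{OP03}.

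The heart of the argument is to separate the ``$\Lambda_n$-direction'' (which is what distinguishes $\El(\La)$) from the ``$(Q_1\times Q_2)$-direction'' (which distinguishes $\El(\G)$). This is exactly the situation governed by a commuting-subalgebra location theorem of the type of \cite[Theorem 4.3]{CDHK20}: there, for a product of two bi-exact factors, a commuting property (T) pair with trivial relative commutant of its join is forced into one of two canonical positions. I would adapt that analysis to the finer product $\mathcal Q = (Q_1\times Q_2)\times\Lambda_n$, playing the relative bi-exactness and weak amenability of the three factors against the pair $\ms_1,\ms_2$ and using property (T) to discard the relatively-amenable alternatives in the Popa--Vaes dichotomy \cite{PV12}. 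The outcome I aim for is that, after relabeling, $\ms_1\prec^{s}_{\emm}\El(\G)$ and $\ms_2\prec^{s}_{\emm}\El(\La)$.

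Granting this separation, the endgame is routine: since all four factors $\ms_1,\ms_2,\El(\G),\El(\La)$ are prime, \cite[Proposition 12]{OP03} upgrades the intertwining $\ms_1\prec^{s}_{\emm}\El(\G)$ to a genuine identification, producing $t>0$ and $u\in\euu(\emm)$ with $u\ms_1^{t}u^{*} = \El(\G)$ and $u\ms_2^{1/t}u^{*} = \El(\La)$, which is precisely unique prime factorization. I expect the separation step to be the main obstacle. The difficulty is that the canonical factors are \emph{not} solid --- $\El(\G)$ and $\El(\La)$ contain the algebras $\El(N_1\times N_2)$ and $\El(\mathbb Z^{4(n+1)})$ of infinite amenable (indeed abelian, in the second case) normal subgroups --- so one cannot invoke solidity directly to control the commuting pair; the bi-exactness of $\Lambda_n$ is only available \emph{relative} to the core $\El(\mathbb Z^{4(n+1)})$, and a property (T) subalgebra need not intertwine into that core. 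Overcoming this requires the delicate relative analysis over the correct peripheral subalgebras, together with an argument that absorbs the amenable directions $\El(N_i)$ and $\El(\mathbb Z^{4(n+1)})$ using property (T).
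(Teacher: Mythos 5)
There is a genuine gap: the step you yourself identify as ``the heart of the argument'' --- producing the intertwining of $\ms_1$ into $\El(\G)$ --- is never carried out; it is only described as a plan (``I would adapt that analysis to the finer product\dots''), and the plan you sketch is not the mechanism that actually works here. You propose to re-run a commuting-pair location theorem in the spirit of \cite[Theorem 4.3]{CDHK20} over the triple product $Q_1\times Q_2\times \Lambda_n$, which would require reproving a bi-exactness/weak-amenability dichotomy for a three-fold product and handling the peripheral cores of all three factors simultaneously. The paper's route is much more economical and avoids this entirely: writing $\La=A\rtimes H$ with $A=\mathbb Z^{4(n+1)}$ amenable and $H=\Lambda_n$ hyperbolic, one views $\emm=\El(\G\times A)\rtimes H$ as a crossed product by the \emph{single} hyperbolic group $H$, with all of $\El(\G)$ absorbed into the coefficient algebra. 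The Popa--Vaes dichotomy \cite{PV12} applied to diffuse amenable $\bee\subseteq\pee_1$ then gives either $\bee\prec_{\emm}\El(\G\times A)$ for all such $\bee$ (whence $\pee_1\prec_{\emm}\El(\G\times A)$ by \cite[Corollary F.14]{BO08}) or $\pee_2\lessdot_{\emm}\El(\G\times A)$, and property (T) of $\pee_2$ converts relative amenability into intertwining. A \emph{second} application of property (T), now against the amenability of $\El(A)$, pushes the image $\mr=\phi(p\pee_1p)\subseteq\El(\G\times A)$ further down into $\El(\G)$. These two intertwinings must then be composed, which requires the non-trivial verification that the product $wv$ of the two partial isometries is nonzero (arranged via the support of $E_{\El(\G\times A)}(vv^*)$); nothing in your proposal addresses this composition step.

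Two smaller points. First, you aim to establish both $\ms_1\prec^{s}_{\emm}\El(\G)$ and $\ms_2\prec^{s}_{\emm}\El(\La)$; only the first is needed, since the position of $\ms_2$ falls out at the end by taking relative commutants. Second, your endgame compresses too much: \cite[Proposition 12]{OP03} only yields $\ms_1\subseteq u\El(\G)^{s}u^{*}$ for some $s>0$; to upgrade this containment to equality one still needs Ge's theorem \cite{Ge} to write $u\El(\G)^{s}u^{*}=\ms_1\tp\mathcal Q$ with $\mathcal Q\subseteq\ms_2$, and then primeness of $\El(\G)$ to force $\mathcal Q$ finite dimensional. Your instinct that primeness enters at this stage is correct, but the intermediate tensor-splitting step should be made explicit.
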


\begin{proof}
	Let $\La = A \rtimes H$, where $A$ is amenable, and $H$ is hyperbolic. Suppose $\emm:= \El(\G \times \La)= \pee_1 \tp \pee_2$. Note that $\emm= \El(\G \times A) \rtimes H$, where $H \ca \El(\G)$ is the trivial action. Let $\bee \subseteq \pee_1$ be a diffuse, amenable subalgebra. Using \cite{PV12}, we have that either \begin{enumerate}
		\item[$i)$] $\bee \prec_{\emm} \El(\G \times A)$, or
		\item[$ii)$] $\pee_2 \subseteq \mathcal N_{\emm}(\bee)'' \lessdot_{\emm} \El(\G \times A)$.
	\end{enumerate}
If case $i)$ above holds for all $\bee \subseteq \pee_1$ diffuse amenable, then using \cite[Corollary F.14]{BO08}, we get that $\pee_1 \prec_{\emm} \El(\G \times A)$. If case $ii)$ above holds for some $\bee \subseteq \pee_1$, then $\pee_2 \lessdot_{\emm} \El(\G \times A)$. However, this implies that $\pee_2 \prec_{\emm} \El(\G \times A)$, as $\pee_2$ has property (T). Without loss of generality, assume that $\pee_1 \prec_{\emm} \El(\G \times A)$. 
\vskip 0.02 in
Thus, there exists nonzero projections $p \in P_1$, $q \in \El(\G \times A)$, a nonzero partial isometry $v \in q \emm p$, and a unital $\ast$-homomorphism 
\begin{equation} \label{int1}
\phi: p \pee_1 p \rightarrow \mr:= \phi(p \pee_1 p) \subseteq q \El(\G \times A) \text{ such that } \phi(x)v=vx \text{ for all }x \in p \pee_1 p.
\end{equation}
  Note that $v^*v \in (p \pee_1 p)' \cap p \emm p= P_2 p$, and $vv^* \in \mr' \cap q \emm q$.
  Moreover, we may arrange so that $support(E_{\El(\G \times A)}(vv^*))=q$. Since $\mr$ has property (T) and $\El(A)$ is amenable, we must have $\mr \prec_{\El(\G \times A)} \El(\G)$. So we can find nonzero projections $r \in \mr$, $s \in \El(\G)$, a nonzero partial isometry $w \in s \El(\G \times A)r$ and a $\ast$-homomorphism
  \begin{equation} \label{int2}
  \psi: r \mr r \rightarrow  s \El(\G )s \text{ such that } \psi(x)w=wx \text{ for all }x \in r \mr r.
  \end{equation} 
  Let $t = \phi^{-1}(r) \in p \pee_1 p$. Let $y \in t(p \pee_1 p)t = t \pee_1 t$. Then we have:
  \begin{equation} \label{intcom}
  \psi(\phi(y))wv= w \phi(y)v = wv y.
  \end{equation}
  We briefly argue that $wv \neq 0$. Assume that $wv=0$. Then, $wvv^*=0$. As $w \in \El(\G \times A)$, we get $w E_{\El(\G \times A)}(vv^*)=0$. This implies that $w {\rm {support}}E_{\El(\G \times A)}(vv^*)=0 \Rightarrow wq=0$. However, this implies that $w=0$, which is a contradiction.  
  \vskip 0.02in
  Hence, by equation ~\ref{intcom} we get that $\pee_1 \prec_{\emm} \El(\G)$. By \cite[Proposition 12]{OP03} we get that there exists $s>0$, and $u \in \euu(\emm)$ such that $\pee_1 \subseteq u \El(\G)^s u^*$. By \cite{Ge}, we get that there exists $\mathcal Q \subseteq \pee_2$ von Neumann subalgebra such that $u \El(\G)^s u^*= \pee_1 \tp Q$. As $\El(\G)$ is a prime factor, we must have that $\mathcal Q$ is a finite dimensional factor. Thus, there exists $t>0$ such that $u \El(\G)^t u^*= \pee_1$. Hence $\pee_2= u \El(\La)^{1/s}u^*$. This proves the theorem.
\end{proof}
We are now ready to prove one of the main results of this section.
\begin{theorem} \label{upf2}
		Let $\G \in \mathscr S$ and $\La \in \mathscr V$ be as in Section ~\ref{grnot}. Then $\emm =\El(\G \times \La)$ admits a Unique Prime Factorization.
\end{theorem}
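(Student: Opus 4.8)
The plan is to induct on $m$, the number of $\mathscr V_1$-factors appearing in $\La = \La_1 \times \cdots \times \La_m \in \mathscr V$, with Proposition ~\ref{upf2ind} serving as the base case $m=1$. Write $P_0 := \El(\G)$ and $P_i := \El(\La_i)$ for $1 \le i \le m$; by the two primeness lemmas of Section ~\ref{grnot} each $P_i$ is a prime $\rm II_1$ factor, so $\emm = P_0 \tp P_1 \tp \cdots \tp P_m$ is the candidate canonical factorization. Since $\G \times \La$ is an i.c.c.\ property (T) group, $\emm$ has property (T), and hence every $\rm II_1$ tensor factor of $\emm$ inherits property (T). I will establish the following refined statement, which is stable under amplification and yields the full UPF by iteration: for any decomposition $\emm = \ms_1 \tp \ms_2$ into $\rm II_1$ factors there exist a partition $\{0,1,\dots,m\} = I_1 \sqcup I_2$ into nonempty sets, a unitary $u \in \euu(\emm)$ and $t>0$ with $u \ms_1^t u^* = \tp_{i \in I_1} P_i$ and $u \ms_2^{1/t} u^* = \tp_{i \in I_2} P_i$.

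For the inductive step, write $\La_m = A_m \rtimes H_m$ with $A_m$ amenable and $H_m$ hyperbolic, set $\La' = \La_1 \times \cdots \times \La_{m-1}$ and $\mathcal Y := \El(\G \times \La')$. Since $H_m$ acts trivially on $\G \times \La'$, we have $\emm = (\mathcal Y \tp \El(A_m)) \rtimes H_m$, structurally identical to the situation of Proposition ~\ref{upf2ind} with core $\El(\G \times \La' \times A_m) = \mathcal Y \tp \El(A_m)$. Choosing a diffuse amenable $\bee \subseteq \ms_1$ and running the \cite{PV12}-dichotomy exactly as there, I obtain either $\bee \prec_\emm \mathcal Y \tp \El(A_m)$ for every such $\bee$, whence $\ms_1 \prec_\emm \mathcal Y \tp \El(A_m)$ by \cite[Corollary F.14]{BO08}, or $\ms_2 \subseteq \mathcal N_\emm(\bee)'' \lessdot_\emm \mathcal Y \tp \El(A_m)$ for some $\bee$, in which case property (T) of $\ms_2$ upgrades relative amenability to $\ms_2 \prec_\emm \mathcal Y \tp \El(A_m)$. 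After possibly swapping $\ms_1$ and $\ms_2$, we may assume $\ms_1 \prec_\emm \mathcal Y \tp \El(A_m)$. Since the intertwined corner $\mr = \phi(p\ms_1 p)$ has property (T) while $\El(A_m)$ is amenable, the same intertwiner-composition used in Proposition ~\ref{upf2ind} (noting the relevant product of partial isometries is nonzero) upgrades this to $\ms_1 \prec_\emm \mathcal Y$.

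It remains to feed this into the induction. From $\ms_1 \prec_\emm \mathcal Y$ and the splitting $\emm = \mathcal Y \tp \El(\La_m)$, \cite[Proposition 12]{OP03} gives $u \in \euu(\emm)$ and $s>0$ with $\ms_1 \subseteq u\mathcal Y^s u^*$; Ge's theorem \cite{Ge} then yields $u\mathcal Y^s u^* = \ms_1 \tp \mathcal Q$ with $\mathcal Q = u\mathcal Y^s u^* \cap \ms_2$, and taking relative commutants in $\emm$ gives $\ms_2 = \mathcal Q \tp u\El(\La_m)^{1/s}u^*$. If $\mathcal Q$ is finite dimensional then $\ms_1$ and $\ms_2$ are conjugate amplifications of $\mathcal Y$ and $\El(\La_m)$, and we take $I_1 = \{0,\dots,m-1\}$, $I_2=\{m\}$. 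Otherwise $u\mathcal Y^s u^* = \ms_1 \tp \mathcal Q$ is a genuine two-factor decomposition of an amplification of $\mathcal Y = \El(\G\times\La')$, so by amplification-stability the inductive hypothesis applies and produces a partition $\{0,\dots,m-1\} = J_1 \sqcup J_2$ matching $\ms_1$ with $\tp_{i\in J_1}P_i$ and $\mathcal Q$ with $\tp_{i\in J_2}P_i$; then $\ms_2 = \mathcal Q \tp u\El(\La_m)^{1/s}u^*$ matches $\tp_{i \in J_2 \cup\{m\}} P_i$, and $I_1 = J_1$, $I_2 = J_2 \cup \{m\}$ closes the induction. I expect the main obstacle to be the bookkeeping in this last step: tracking the unitary conjugacy and the amplification parameters through \cite[Proposition 12]{OP03} and Ge's theorem so that $u\mathcal Y^s u^* = \ms_1 \tp \mathcal Q$ is literally a decomposition to which the amplification-stable inductive hypothesis applies, and verifying the partition stays nonempty on both sides in each branch. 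The genuinely analytic content, by contrast, is inherited essentially verbatim from Proposition ~\ref{upf2ind}.
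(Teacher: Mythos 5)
Your proof is correct and follows essentially the same route as the paper: induction on the number of $\mathscr V_1$-factors with Proposition~\ref{upf2ind} as the base case, the \cite{PV12} dichotomy over the hyperbolic quotient $H_m$, property (T) to pass the amenable part $\El(A_m)$, and then \cite[Proposition 12]{OP03} together with Ge's theorem and relative commutants to close the induction. The only (cosmetic) difference is in the bookkeeping of the last step: the paper uses \cite[Lemma 3.5]{Va08} to flip the intertwining to $\El(\La_m)\prec_\emm\ms_2$ and splits $\El(\La_m)$ off of $\ms_2^{t}$, so the induction hypothesis is applied to $\El(\G\times\La')$ itself rather than to an amplification, whereas you apply it to $u\mathcal Y^s u^*=\ms_1\tp\mathcal Q$ and correctly compensate by formulating the induction hypothesis in an amplification-stable form.
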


\begin{proof}
	Since $\La \in \mathscr V$, $\La= \La_1 \times \cdots \La_n$, where $\La_i = A_i \rtimes H_i \in \mathscr V_1$ for all $i$. Assume $\emm= \pee_1 \tp \pee_2$.
	
	So, $\pee_1 \tp \pee_2= \El(\G \times \La_1 \times \cdots \La_{n-1} \times A_n) \rtimes H_n$. As $H_n$ is hyperbolic, and $\pee_i$'s have property (T), using \cite[Theorem 1.4]{PV12} we get that $\pee_i \prec_{\emm} \El(\G \times \La_1 \times \cdots \La_{n-1} \times A_n)$ for some $i \in \{1,2\}$. Without loss of generality, assume that $\pee_1 \prec_{\emm} \El(\G \times \La_1 \times \cdots \La_{n-1}) \tp \El(A_n)$. Since $\pee_1$ has property (T), and $\El(A_n)$ is amenable, arguing exactly as in the proof of Proposition ~\ref{upf2ind}, we get that $\pee_1 \prec_{\emm} \El(\G \times \La_1 \times \cdots \La_{n-1})$. Taking relative commutants, and using \cite[Lemma 3.5]{Va08} we get that $\El(\La_n) \prec_{\emm} \pee_2$. Using \cite[Proposition 12]{OP03}, there exists $u_n \in \euu(\emm)$, $t_n>0$, such that:
	\begin{equation*}
	\El(\La_n) \subseteq u_n \pee_2^{t_n} u_n^* \subseteq \El(\G \times \La_1 \cdots \times \La_{n-1}) \tp \El(\La_n).
	\end{equation*}
	Using \cite{Ge}, we get that $u_n \pee_2^{t_n} u_n^*= \mathcal Q \tp \El(\G \times \La_1 \cdots \times \La_{n-1})$, where $\mathcal Q \subseteq \El(\G \times \La_1 \cdots \times \La_{n-1})$. Moreover, by taking relative commutants, we get that $u_n \pee_1^{1/t_n}u_n^*= \mathcal Q' \cap \El(\G \times \La_1 \cdots \times \La_{n-1})$. Since $\pee_1 \vee \pee_2 = \emm$, we get that $u_n \pee_1^{1/t_n}u_n^* \vee Q = \El(\G \times \La_1 \times \cdots \times \La_{n-1})$. The result now follows by induction, and Proposition ~\ref{upf2ind}.
\end{proof}

\begin{corollary} \label{fundpdtclassSV}
	Let $\G \in \mathscr S$ and $\La \in \mathscr V$ be as above. Then $\mathcal F(\El(\G \times \La))=1$.
\end{corollary}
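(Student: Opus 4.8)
The plan is to obtain the triviality of $\mathcal F(\emm)$, where $\emm=\El(\G\times\La)=\El(\G)\tp\El(\La)$, as a formal consequence of the unique prime factorization proved in Theorem~\ref{upf2}, fed two external inputs: $\mathcal F(\El(\G))=\{1\}$ for $\G\in\mathscr S$ and $\mathcal F(\El(\La))=\{1\}$ for $\La\in\mathscr V$, both established in \cite{CDHK20}. Fix $t\in\mathcal F(\emm)$; the goal is to force $t=1$. Choosing an isomorphism $\theta\colon\emm^t\to\emm$ and realizing the amplification concretely as $\emm^t=\El(\G)^t\tp\El(\La)$, I would transport this tensor splitting through $\theta$ to obtain a genuine tensor decomposition $\emm=\ms_1\tp\ms_2$ into $\mathrm{II}_1$ factors, with $\ms_1=\theta(\El(\G)^t)\cong\El(\G)^t$ and $\ms_2=\theta(\El(\La))\cong\El(\La)$.

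Next I would feed $\emm=\ms_1\tp\ms_2$ into the unique prime factorization of Theorem~\ref{upf2}. This yields a scalar $s>0$ and a unitary $u\in\euu(\emm)$ for which, after possibly interchanging the two factors, either $u\ms_1^s u^*=\El(\G)$ and $u\ms_2^{1/s}u^*=\El(\La)$, or $u\ms_1^s u^*=\El(\La)$ and $u\ms_2^{1/s}u^*=\El(\G)$. In the non-interchanged alternative, using that unitary conjugation is an isomorphism and that amplification is multiplicative, $\El(\G)\cong\ms_1^s\cong\El(\G)^{ts}$ forces $ts\in\mathcal F(\El(\G))=\{1\}$, while $\El(\La)\cong\ms_2^{1/s}\cong\El(\La)^{1/s}$ forces $1/s\in\mathcal F(\El(\La))=\{1\}$; together these give $s=1$ and hence $t=1$.

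The point that requires care is the interchanged alternative, since at first glance it appears to demand that $\El(\G)$ and $\El(\La)$ be stably isomorphic, which we do not wish to exclude by hand. Rather than proving such a non-isomorphism, I would exploit the two resulting stable isomorphisms directly: from $\El(\La)\cong\ms_1^s\cong\El(\G)^{ts}$ and $\El(\G)\cong\ms_2^{1/s}\cong\El(\La)^{1/s}$ one amplifies the second by $ts$ to get $\El(\G)^{ts}\cong\El(\La)^{t}$, and combining with the first yields $\El(\La)\cong\El(\La)^{t}$, whence $t\in\mathcal F(\El(\La))=\{1\}$ and again $t=1$. A fully careful version of this last step instead tracks the canonical prime factorization $\El(\G)\tp\El(\La_1)\tp\cdots\tp\El(\La_n)$ (with $\La_i\in\mathscr V_1$): applying the uniqueness of Theorem~\ref{upf2} to the transported decomposition produces a permutation of these prime blocks together with amplification parameters whose product is $1$, and since every block has trivial fundamental group the scalars attached to any mixing of stably isomorphic blocks cancel, again forcing $t=1$. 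I expect no genuine obstacle here: the substantive analytic work lives entirely in Theorem~\ref{upf2}, and this corollary is the standard Ozawa--Popa amplification bookkeeping (as in \cite{OP03}), the sole subtlety being the clean handling of the interchanged case via $\mathcal F(\El(\La))=\{1\}$.
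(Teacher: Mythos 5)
Your proposal is correct and follows the same overall strategy as the paper: realize the $t$-amplification as a tensor decomposition $\emm=\ms_1\tp\ms_2$ with $\ms_1\cong\El(\G)^t$ and $\ms_2\cong\El(\La)$, feed it to Theorem~\ref{upf2}, and conclude from the triviality of $\mathcal F(\El(\G))$ and $\mathcal F(\El(\La))$ established in \cite{CDHK20}. The one genuine point of divergence is the interchanged alternative. The paper excludes it outright by a structural dichotomy: $\El(\La)$ has a Cartan subalgebra while $\El(\G)$ does not (both facts from \cite{CDHK20}), so $\El(\G)\cong\El(\La)^s$ is impossible. You instead keep the interchanged case alive and compose the two stable isomorphisms, $\El(\La)\cong\El(\G)^{ts}$ and $\El(\G)\cong\El(\La)^{1/s}$, to get $\El(\La)\cong\El(\La)^{t}$ and hence $t\in\mathcal F(\El(\La))=\{1\}$; this computation is correct and self-contained. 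What each route buys: the paper's argument is shorter and also tells you that the swapped matching never occurs (which is the stronger, ``labelled'' form of uniqueness implicit in Theorem B), whereas yours needs only the triviality of the component fundamental groups and no Cartan-subalgebra input, so it would survive in settings where the two tensor factors cannot be distinguished structurally. Your closing remark about tracking the finer factorization $\El(\G)\tp\El(\La_1)\tp\cdots\tp\El(\La_n)$ is unnecessary here, since Theorem~\ref{upf2} is applied to the two-block decomposition directly, but it does no harm.
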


\begin{proof}
	Let $t>0$ be such that $\El(\G \times \La) \cong \El(\G)^t \tp \El(\La)$. Using Theorem~\ref{upf2}, we get that there exists $s>0$ such that:
	\begin{enumerate}
		\item [a)] Either $\El(\G) \cong \El(\G)^{ts} $ and $\El(\La) \cong \El(\La)^{1/s}$ or,
		\item [b)] $\El(\G) \cong \El(\La)^{s}$ and $\El(\La) \cong \El(\G)^{t/s}$.
	\end{enumerate}

Note that case b) above can't hold, as $\El(\La)$ has a Cartan subalgebra by \cite{CDHK20}, while $\El(\G)$ does not. Therefore, case a) above holds. By \cite{CDHK20}, we further get that $st=1$ and $s=1$, which forces $t=1$.
\end{proof}
We now establish UPF for products of groups in $\mathscr S$. This is the main technical result of this section.
\begin{theorem} \label{upfclassS}
	Let $\G_1, \G_2 \in \mathscr S$. Then $\emm:= \El(\G_1 \times \G_2)$ admits a Unique Prime Factorization.
\end{theorem}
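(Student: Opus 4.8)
The plan is to run the same endgame as in Proposition~\ref{upf2ind}: the whole statement reduces to producing a single intertwining of the form $\pee_i\prec_{\emm}\El(\G_j)$. Indeed, once, say, $\pee_1\prec_{\emm}\El(\G_1)$ is established, primeness of $\El(\G_1)$ together with \cite[Proposition 12]{OP03} yields $s>0$ and $u\in\euu(\emm)$ with $\pee_1\subseteq u\El(\G_1)^s u^*$; by \cite{Ge} we may write $u\El(\G_1)^s u^*=\pee_1\tp\mathcal Q$ with $\mathcal Q\subseteq\pee_2$, and primeness forces $\mathcal Q$ to be finite dimensional, so that $u\El(\G_1)^t u^*=\pee_1$ and $u\El(\G_2)^{1/t}u^*=\pee_2$ for a suitable $t>0$. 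Thus the entire content of the theorem lies in producing that one intertwining. Write $\G_i=(N^{(i)}_1\times N^{(i)}_2)\rtimes(Q^{(i)}_1\times Q^{(i)}_2)$ as in Section~\ref{grnot}, put $N^{(i)}=N^{(i)}_1\times N^{(i)}_2$ and $Q^{(i)}=Q^{(i)}_1\times Q^{(i)}_2$, observe that $\emm=\El(\G_1)\tp\El(\G_2)$, and note that $\pee_1,\pee_2$ inherit property (T) from $\emm$. Assume $\emm=\pee_1\tp\pee_2$.

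First I would separate the two bi-exact quotient directions of $\G_2$. Since $Q^{(2)}_2$ acts trivially on $\G_1$ and normalizes $N^{(2)}\rtimes Q^{(2)}_1$, we have $\emm=\El(\G_1\times(N^{(2)}\rtimes Q^{(2)}_1))\rtimes Q^{(2)}_2$ with $Q^{(2)}_2$ bi-exact and weakly amenable. Exactly as in Proposition~\ref{upf2ind} I would feed diffuse amenable subalgebras of $\pee_1$ into the dichotomy of \cite[Theorem 1.4]{PV12}, use \cite[Corollary F.14]{BO08} to promote the first alternative and property (T) of $\pee_2$ to promote the relatively-amenable alternative, and conclude (after relabelling) $\pee_1\prec_{\emm}\El(\G_1\times(N^{(2)}\rtimes Q^{(2)}_1))$. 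Passing to the property (T) factor $\mr=\phi(p\pee_1 p)$ produced by the intertwiner and repeating this with the bi-exact group $Q^{(2)}_1$ inside $\El(\G_1\times N^{(2)})\rtimes Q^{(2)}_1$ — where now property (T) of $\mr$ by itself collapses the relatively-amenable alternative — I would arrive at $\pee_1\prec_{\emm}\El(\G_1\times N^{(2)})=\El(\G_1)\tp\El(N^{(2)})$.

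The crux, and the step I expect to be the genuine obstacle, is to absorb the kernel and improve this to $\pee_1\prec_{\emm}\El(\G_1)$. In the mixed case of Proposition~\ref{upf2ind} the corresponding kernel $\El(A)$ was amenable, so this passage was immediate: a property (T) algebra sitting in $\El(\G_1)\tp\El(A)$ is automatically amenable relative to $\El(\G_1)$ and hence intertwines into it. Here $N^{(2)}$ is a product of Rips kernels and is far from amenable, so that soft argument is unavailable and a new input is required. The route I would pursue is to peel the kernel factors $\El(N^{(2)}_1),\El(N^{(2)}_2)$ one at a time, viewing $\El(\G_1\times N^{(2)}_1)\tp\El(N^{(2)}_2)$ as a trivial-action crossed product $\El(\G_1\times N^{(2)}_1)\rtimes N^{(2)}_2$ and applying the deformation/rigidity dichotomy of \cite[Theorem 1.4]{PV12} to the property (T) factor $\mr$ — the weak amenability and bi-exactness of the Rips kernels being exactly what would make this legitimate, while property (T) of $\mr$ forces the intertwining alternative. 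Should the kernels fail to be peelable in this direct way, the fallback is to invoke the structural dichotomy \cite[Theorem 4.3]{CDHK20} for the Rips von Neumann algebra $\El(\G_2)$ to locate the image of $\mr$ off the kernel. Either way one reaches $\pee_1\prec_{\emm}\El(\G_1)$.

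With $\pee_1\prec_{\emm}\El(\G_1)$ and $\El(\G_1)$ prime, the endgame recorded in the first paragraph applies verbatim and produces the asserted $t>0$ and $u\in\euu(\emm)$; the two tensor factors play symmetric roles, so exchanging indices covers the ``vice versa'' alternative. The only delicate point throughout is the kernel-absorption step, which is precisely where the present case (both factors in $\mathscr S$) departs from Theorem~\ref{upf2}, whose inner factor carried an honestly amenable normal part.
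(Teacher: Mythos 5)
Your endgame (primeness of $\El(\G_1)$ plus \cite[Proposition 12]{OP03} and \cite{Ge} once a single intertwining $\ms_i\prec_{\emm}\El(\G_1)$ is in hand) matches the paper, but the heart of the argument --- producing that intertwining --- has a genuine gap, and it sits exactly where you flag it. After reducing to $\pee_1\prec_{\emm}\El(\G_1)\tp\El(M_1\times M_2)$ (paper's notation: $\G_2=(M_1\times M_2)\rtimes P$), your plan is to peel off the Rips kernels $M_1,M_2$ by viewing $\El(\G_1\times M_1)\tp\El(M_2)$ as a crossed product by $M_2$ and invoking \cite[Theorem 1.4]{PV12}. That theorem requires the acting group to be hyperbolic (or weakly amenable and bi-exact); in Class $\mathscr S$ those hypotheses are imposed on the quotients $P_1,P_2$, not on the kernels. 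Rips kernels are finitely generated normal subgroups of small-cancellation groups with no weak amenability or bi-exactness available, so this application of \cite{PV12} is not legitimate. The fallback via \cite[Theorem 4.3]{CDHK20} also does not help: that dichotomy takes two commuting property (T) subfactors with trivial joint relative commutant and places their join in $\El(N)$ or $\El(Q)$, which is neither the setting of the corner $\mr$ nor the direction (``off the kernel'') you need. A secondary issue: in your iterated use of \cite{PV12} on the corner $\mr=\phi(p\pee_1p)$, the relatively amenable alternative reads $\mathcal N(\bee)''\lessdot\El(\G_1\times M_1\times M_2)$ for some diffuse amenable $\bee\subseteq\mr$, and $\mr$ need not be contained in $\mathcal N(\bee)''$, so property (T) of $\mr$ alone does not collapse that branch (in Proposition \ref{upf2ind} this worked because the \emph{commuting} factor $\pee_2$ normalizes $\bee$).

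The paper circumvents the kernel entirely by an augmentation-and-intersection argument rather than a peeling argument. It embeds $\emm$ into $\tilde{\emm}=\El(G_1\times G_2\times G_3\times G_4)$, where $G_1=N_1\rtimes Q$, $G_2=N_2\rtimes Q$, $G_3=M_1\rtimes P$, $G_4=M_2\rtimes P$ are the individual Rips groups and $\G_1\times\G_2$ sits diagonally with $\emm'\cap\tilde\emm=\mc$. Each $G_k$ is hyperbolic relative to $H_k\in\{Q,P\}$, so \cite[Theorem 5.3]{CDK19} applies to the commuting property (T) subfactors $\ms_1,\ms_2$ of $\tilde\emm$; the ``$\ms_1\vee\ms_2\prec\El(\hat{G_k}\times H_k)$'' branch is excluded by an infinite-index computation, and intersecting the resulting target with $\G_1\times\G_2$ (via \cite[Lemma 2.3]{CDK19}) yields $\ms_i\prec_{\tilde\emm}\El(\G_1)\tp\El(M_1)$ for $k=4$ and $\ms_j\prec_{\tilde\emm}\El(\G_1)\tp\El(M_2)$ for $k=3$. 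After showing $i=j$, the two intertwinings are upgraded to $\prec^s$ and intersected using \cite[Lemma 2.8(2)]{DHI16} to give $\ms_i\prec^s_{\tilde\emm}\El(\G_1)$, then pulled back to $\emm$. This intersection of two targets, each still containing one kernel factor, is what replaces the kernel absorption your proposal needs and cannot supply.
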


\begin{proof}
	We let $\G_1=(N_1 \times N_2) \rtimes Q$, and $\G_2=(M_1 \times M_2) \rtimes P$. Suppose there exits $\rm II_1$ factors $\ms_1$ and $\ms_2$ such that $\emm=\ms_1 \tp \ms_2$. To establish the desired result, it suffices to show that $\ms_i \prec_{\emm} \El(G_1)$ for $i=1$ or $i=2$.\\	
	Let $G_1= N_1 \rtimes Q$, $G_2=N_2 \rtimes Q$, $G_3=M_1 \rtimes P$ and $G_4=M_2 \rtimes P$. We also note that $G_1$ is hyperbolic relative to $H_1=Q$, $G_2$ is hyperbolic relative to $H_2=Q$, $G_3$ is hyperbolic relative to $H_3=P$ and $G_4$ is hyperbolic relative to $H_4=P$
	 We denote $\El(G_1 \times G_2\times G_3 \times G_4)$by $\tilde{\emm}$. Note that $\emm \subseteq \tilde{\emm}$, and $\emm' \cap \tilde{\emm}=\mc$. \\	 
	Note that $\ms_1$ and $\ms_2$ are commuting property (T) subfactors of $\tilde{\emm}$. Using \cite[Theorem 5.3]{CDK19}, for every $k \in \{1,2,3,4\}$ there exists $i \in \{1,2\}$ such that one of the following holds:
	 \begin{enumerate}
	 	\item [a)] Either  $\ms_i \prec_{\tilde{\emm}} \El(\hat{G_k})$or,
	 	\item [b)] $\ms_1 \vee \ms_2 \prec_{\tilde{\emm}} \El(\hat{G_k} \times H_k)$.
	 \end{enumerate}
 We briefly argue that Case b) above doesn't hold for any $k \in \{1,2,3,4\}$. For ease of presentation, we shall only consider $k=2$ ( For $k=1,3,4$ the proof is entirely analogous).\\ 
 Assume that $\emm= \ms_1 \vee \ms_2 \prec_{\tilde{\emm}} \El(\hat{G_2} \times H_2)$. So, $\El(\G_1 \times \G_2)\prec_{\tilde{\emm}} \El(G_1 \times H_2 \times G_3 \times G_4)$. This implies that $\El(\G_1) \prec_{\El(G_1 \times G_2)} \El(G_1 \times H_2)$. By \cite[Lemma 2.3]{CDK19}, there exists $h \in G_1 \times G_2$ such that  $\El((N_1 \times N_2) \rtimes diag(Q)) \prec_{\El(G_1 \times G_2)} \El(((N_1 \times N_2) \rtimes Q)) \cap h((N_1 \rtimes Q) \times Q)h^{-1})$. Note that we may assume $h \in N_2$. So, $(N_1 \times N_2) \rtimes Q \cap h((N_1 \rtimes Q) \times Q)h^{-1})= h((N_1 \times 1) \rtimes diag(Q))h^{-1}$. Hence we get that $\El((N_1 \times N_2) \rtimes diag(Q)) \prec_{\El(G_1 \times G_2)} \El((N_1 \times 1) \rtimes diag(Q))$. Applying \cite[Lemma 2.5]{CDK19} we get that $\El((N_1 \times N_2) \rtimes diag(Q)) \prec_{(N_1 \times N_2) \rtimes diag(Q)} \El((N_1 \times 1) \rtimes Q)$. As $[(N_1 \times N_2) \rtimes diag(Q): (N_1 \times 1) \rtimes diag(Q)]= \infty$, this is impossible (for instance, by using \cite[Proposition 2.3]{CD18}).
\vskip 0.02 in
Thus, we have that for each $k \in \{1,2,3,4 \}$, there exists $i \in \{1,2\}$ 
such that $\ms_i \prec_{\tilde{\emm}} \El(\hat{G_k})$. Applying this for $k=4$, we get that there exists $i \in \{1,2\}$ such that $\ms_i \prec_{\tilde{\emm}} \El(G_1 \times G_2 \times G_3)$. 
As $\ms_i \subseteq \El(\G_1 \times \G_2)$, 
\cite[Lemma 2.3]{CDK19} yields that $\ms_i \prec_{\tilde{\emm}} \El((\G_1 \times \G_2) \cap g(G_1 \times G_2 \times G_3)g^{-1})$ for some $g \in G_1 \times G_2 \times G_3 \times G_4$. Note that $(\G_1 \times \G_2) \cap g(G_1 \times G_2 \times G_3)g^{-1}= \G_1 \times (\G_2 \cap G_3)=\G_1 \times (M_1 \times 1)$. So, $\ms_i \prec_{\tilde{\emm}} \El(\G_1) \tp \El(M_1)$. \\
Arguing exactly as above for $k=3$, we get that there exits $j \in \{1,2\}$ such that $\ms_j \prec_{\tilde{\emm}} \El(\G_1) \tp \El(M_2)$. 
\vskip 0.02 in
We claim that $i =j$. Otherwise, we would have $\ms_1 \prec_{\tilde{\emm}} \El(\G_1) \tp \El(M_2)$ and $\ms_2 \prec_{\tilde{\emm}} \El(\G_1) \tp \El(M_1)$ (or vice versa). Either way, we have that $\ms_i \prec_{\tilde{\emm}} \El(G_1 \times G_2) \tp \El(M_1 \times M_2)$ for $i=1,2$. 
Note that $\mathcal N_{\tilde{\emm}}(\ms_i)' \cap \tilde{\emm} \subseteq (\ms_1 \vee \ms_2)' \cap \tilde{\emm} = \emm' \cap \tilde{\emm} =\mc$. As $\ms_1$ and $\ms_2$ are commuting subfactors of $\tilde{\emm}$, and as $\El(G_1 \times G_2) \tp \El(M_1 \times M_2)$ is regular in $\tilde{\emm}$, by \cite[Lemma 2.6]{Is20}, we get that $\emm= \ms_1 \vee \ms_2 \prec_{\tilde{\emm}} \El(G_1 \times G_2) \tp \El(M_1 \times M_2)$. Note that by \cite[Lemma 2.6]{CDK19} we actually have that $\emm=\El(\G_1 \times \G_2) \prec{\El(\G_1)\tp \El(G_3 \times G_2)}\El(\G_1) \tp \El(M_1 \times M_2)$, which in turn implies that $\El(\G_2) \prec_{\El(G_3 \times G_4)} \El(M_1 \times M_2)$. Using \cite[Lemma 2.4]{CDK19}, we get that $\El(\G_2) \prec_{\El(\G_2)} \El(M_1 \times M_2)$, which is impossible (for instance by \cite[Proposition 2.3]{CD18}).
\vskip 0.02 in
Thus we get that there exists $i$ such that $\ms_i \prec_{\tilde{\emm}} \El(\G_1 ) \tp \El(M_1)$ and $\ms_i \prec_{\tilde{\emm}} \El(\G_1 ) \tp \El(M_2)$. Note that, as $\mathcal N_{\tilde{\emm}}(\ms_i)' \cap \tilde{\emm} =\mc$, using \cite[Lemma 2.4(2)]{DHI16}, we actually have that  $\ms_i \prec_{\tilde{\emm}}^s \El(\G_1 ) \tp \El(M_1)$ and $\ms_i \prec_{\tilde{\emm}}^s \El(\G_1 ) \tp \El(M_2)$. Now, using \cite[Lemma 2.8(2)]{DHI16},  we get that $\ms_i \prec_{\tilde{\emm}}^s (\El(\G_1 ) \tp \El(M_1)) \cap (\El(\G_1 ) \tp \El(M_2)) = \El(\G_1)$. Now, by \cite[Lemma 2.6]{CDK19} we in fact have $\ms_i \prec_{\emm} \El(\G_1)$. Applying \cite[Proposition 12]{OP03} we get the desired result.
\end{proof}
\begin{corollary} \label{fundpdtclassS}
	Let $\G_1, \G_2 \in \mathscr S$. Then $ \mathcal F(\El(\G_1 \times \G_2))= \{1\}$.
\end{corollary}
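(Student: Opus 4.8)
The plan is to deduce the triviality of the fundamental group directly from the unique prime factorization established in Theorem~\ref{upfclassS}, in the same spirit as Corollary~\ref{fundpdtclassSV} but with one essential modification. Since both $\G_1$ and $\G_2$ lie in $\mathscr S$, the two prime factors $\El(\G_1)$ and $\El(\G_2)$ are of the same nature; unlike in Corollary~\ref{fundpdtclassSV}, I therefore \emph{cannot} distinguish them by the presence or absence of a Cartan subalgebra, so the ``swap'' alternative in the UPF must be dealt with on its own terms rather than simply excluded.

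First I would fix $t \in \mathcal F(\emm)$ and choose an isomorphism $\theta \colon \emm^t \to \emm$. Writing $\emm^t = \El(\G_1)^t \tp \El(\G_2)$ and setting $\ms_1 = \theta(\El(\G_1)^t)$, $\ms_2 = \theta(\El(\G_2))$, I obtain a tensor decomposition $\emm = \ms_1 \tp \ms_2$ into $\rm II_1$ factors with $\ms_1 \cong \El(\G_1)^t$ and $\ms_2 \cong \El(\G_2)$. Applying Theorem~\ref{upfclassS} (together with \cite[Proposition 12]{OP03}), there is an $r > 0$ and a unitary conjugacy matching this decomposition to the canonical one, so that either (i) $\ms_1^r \cong \El(\G_1)$ and $\ms_2^{1/r} \cong \El(\G_2)$, or (ii) $\ms_1^r \cong \El(\G_2)$ and $\ms_2^{1/r} \cong \El(\G_1)$.

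In case (i), unwinding the amplifications gives $\El(\G_1)^{tr} \cong \El(\G_1)$ and $\El(\G_2)^{1/r} \cong \El(\G_2)$, so that $tr \in \mathcal F(\El(\G_1))$ and $1/r \in \mathcal F(\El(\G_2))$; since both fundamental groups are trivial by \cite{CDHK20}, I get $r = 1$ and hence $t = 1$. In case (ii), I instead obtain $\El(\G_1)^{tr} \cong \El(\G_2)$ and $\El(\G_2) \cong \El(\G_1)^r$; substituting the second relation into the first yields $\El(\G_1)^{tr} \cong \El(\G_1)^r$, and amplifying by $1/r$ gives $\El(\G_1)^t \cong \El(\G_1)$, so again $t \in \mathcal F(\El(\G_1)) = \{1\}$ by \cite{CDHK20}. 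Either way $t = 1$, which proves $\mathcal F(\emm) = \{1\}$.

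The main obstacle I anticipate is precisely case (ii): here $\El(\G_1)$ and $\El(\G_2)$ may genuinely be amplifications of one another, so a naive attempt to rule out the swap by a structural invariant fails. The point that resolves this cleanly is that one need not exclude the swap at all; combining the two isomorphisms supplied by the UPF collapses it to a single amplification relation for $\El(\G_1)$ alone, after which triviality of $\mathcal F(\El(\G_1))$ from \cite{CDHK20} closes the argument uniformly. Thus the only genuine input beyond Theorem~\ref{upfclassS} is the triviality of the fundamental groups of the individual prime factors $\El(\G_i)$.
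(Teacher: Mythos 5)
Your proposal is correct and follows the same overall strategy as the paper: apply the UPF of Theorem~\ref{upfclassS} to the decomposition $\El(\G_1)^t \tp \El(\G_2)$ and then feed the resulting amplification relations into the triviality of $\mathcal F(\El(\G_i))$ from \cite{CDHK20}. The one genuine difference is in the swap case. The paper's case b) produces $\El(\G_1) \cong \El(\G_2)^s$ and $\El(\G_2) \cong \El(\G_1)^{t/s}$ and then invokes \cite[Theorem 4.6]{CDHK20} --- a stable isomorphism rigidity statement for group factors in $\mathscr S$ --- to force $s=1$ and $t/s=1$. You instead compose the two isomorphisms, $\El(\G_1)^{tr} \cong \El(\G_2) \cong \El(\G_1)^{r}$, amplify by $1/r$, and land back in $t \in \mathcal F(\El(\G_1)) = \{1\}$. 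This is more elementary: it removes the dependence on \cite[Theorem 4.6]{CDHK20} entirely, so the only input beyond the UPF is the triviality of the fundamental groups of the individual tensor factors. (In fact the same composition trick would work verbatim on the paper's own case b).) Your case (i) is identical to the paper's case a). No gaps.
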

\begin{proof}
	Suppose $t \in \mathcal F(\El(\G_1 \times \G_2))$. Then $\El(\G_1)\ tp \El(G_2) \cong \El(\G_1)^t \tp \El(\G_2) $. Then by Theorem~\ref{upfclassS}, there exists $s>0$ such that
	\begin{enumerate}
		\item[a)] Either $\El(\G_1) \cong \El(\G_1)^{st}$ and $\El(\G_2) \cong \El(\G_2)^{1/s}$; or
		\item [b)] $\El(\G_1) \cong \El(\G_2)^s$ and $\El(\G_2) \cong \El(G_1)^{t/s}$.
	\end{enumerate}  
Since $\mathcal F(\El(G_1))= \mathcal F(\El(G_2)) =\{1\}$, Case a) above immediately yields that $st=1$ and $1/s=1$. This implies $t=1$. If Case b) above holds, then using \cite[Theorem 4.6]{CDHK20}, we get that $s=1$, $t/s=1$, which implies that $t=1$.
\end{proof}
\vskip 0.02 in
\textbf{Final Remarks:} As mentioned in the introduction, Theorem ~\ref{upf2}, and Theorem ~\ref{upfclassS} provide further evidence towards Popa's conjecture on prime decomposition of property (T) factors (second last problem on page 9 in \cite{Po13}; see also Problem S.4 in Peterson's problem list \cite{PProb}), while Corollary ~\ref{fundpdtclassSV} and Corollary ~\ref{fundpdtclassS} provide further evidence towards Popa's conjecture on triviality of Fundamental group of property (T) group factors (last problem on page 9 in \cite{Po13}).

\section{Some More UPF Results}
In this section, we prove two additional unique prime factorization results. Our first result (Theorem ~\ref{resfinupf}) establishes a unique prime factorization result for direct products of i.c.c., property (T) groups, that are hyperbolic relative to infinite residually finite subgroups. Our second result (Theorem ~\ref{upf}) establishes unique prime factorization of tensor product of a prime full factor with a group von Neumann algebra of an icc,hyperbolic group.

The following proposition establishes the base case of induction used in Theorem~\ref{resfinupf}.
\begin{proposition} \label{basecase}
	For $i=1,2$ let $H_i \leq G_i$ be inclusions of infinite groups such that $H_i$ is residually finite. Assume that $[G_2:H_2]= \infty$. Further assume that each $G_i$ is icc, property (T) group, and that $G_i$ is hyperbolic relative to $H_i$ for all $i$. Then $\enn:= \El(G_1 \times G_2)$ admits a UPF.
\end{proposition}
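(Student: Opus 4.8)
The plan is to mirror the proof of Proposition~\ref{upf2ind}: assume $\enn = \ms_1 \tp \ms_2$ with $\ms_1, \ms_2$ being $\rm II_1$ factors, and reduce everything to showing that $\ms_i \prec_\enn \El(G_1)$ for some $i \in \{1,2\}$. Granting this one intertwining, the endgame is automatic. Namely, \cite[Proposition 12]{OP03} yields $s>0$ and $u \in \euu(\enn)$ with $\ms_i \subseteq u\El(G_1)^s u^*$; since $\ms_i$ is a factor with $\ms_i' \cap \enn = \ms_{3-i}$, Ge's theorem \cite{Ge} gives $u\El(G_1)^s u^* = \ms_i \tp \mathcal Q$ with $\mathcal Q \subseteq \ms_{3-i}$; and because $\El(G_1)$ is prime (which one verifies from the relative Popa--Vaes dichotomy \cite{PV12} exactly as in the two primeness lemmas of Section~\ref{grnot}), the algebra $\mathcal Q$ must be finite dimensional, producing the unitary conjugation matching $\ms_1, \ms_2$ with amplifications of $\El(G_1), \El(G_2)$. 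This is the desired UPF.

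First I would set up the dichotomy. The algebra $\enn$ is the group factor of the product $G_1 \times G_2$ of the two relatively hyperbolic groups $G_1$ (hyperbolic relative to $H_1$) and $G_2$ (hyperbolic relative to $H_2$), whose peripheral subgroups $H_1, H_2$ are residually finite by hypothesis. Moreover $\ms_1, \ms_2$ are commuting property (T) subfactors of $\enn$ (property (T) being inherited by tensor factors of the property (T) factor $\enn$) with $(\ms_1 \vee \ms_2)' \cap \enn = \mathbb C$. This is exactly the configuration to which the commuting-subalgebra dichotomy \cite[Theorem 5.3]{CDK19} applies; taking there the second coordinate ($k = 2$, so that the omitted factor is $\hat G_2 = G_1$ with peripheral $H_2$), I obtain that either $\ms_i \prec_\enn \El(G_1)$ for some $i$ --- which is precisely what is wanted --- or else $\enn = \ms_1 \vee \ms_2 \prec_\enn \El(G_1 \times H_2)$.

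It therefore remains to rule out the second alternative, which is where the hypothesis $[G_2 : H_2] = \infty$ enters. The relation $\enn = \El(G_1 \times G_2) \prec_\enn \El(G_1 \times H_2)$ would force the inclusion $G_1 \times H_2 \leq G_1 \times G_2$ to have finite index, i.e.\ $[G_2 : H_2] < \infty$ (a full group factor cannot embed into the subfactor attached to an infinite-index subgroup; cf.\ the argument using \cite[Proposition 2.3]{CD18} already invoked in the proof of Theorem~\ref{upfclassS}), contradicting the hypothesis. Hence $\ms_i \prec_\enn \El(G_1)$ for some $i$, and the reduction of the first paragraph completes the proof.

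I expect the only genuinely delicate point to be the verification that \cite[Theorem 5.3]{CDK19} applies verbatim to the abstract product $G_1 \times G_2$ at this level of generality; here residual finiteness of the peripheral subgroups $H_1, H_2$ is exactly the standing hypothesis of that dichotomy, which is why it is imposed in the statement, and one must also confirm that the commuting/rigidity and trivial-relative-commutant conditions are met by $\ms_1, \ms_2$. Once the dichotomy is in hand, the index obstruction ruling out the peripheral alternative and the final \cite[Proposition 12]{OP03} bookkeeping are routine given the cited results, which is what makes this the natural base case for the induction carried out in Theorem~\ref{resfinupf}.
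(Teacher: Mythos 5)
Your proposal follows essentially the same route as the paper's proof: apply \cite[Theorem 5.3]{CDK19} to the commuting property (T) subfactors $\ms_1,\ms_2$, rule out the alternative $\ms_1\vee\ms_2\prec_{\enn}\El(G_1\times H_2)$ via the infinite-index hypothesis on $H_2\leq G_2$, and then run the standard \cite[Proposition 12]{OP03} plus \cite{Ge} plus primeness endgame. The one correction: primeness of $\El(G_1)$ does not follow from the \cite{PV12} dichotomy ``exactly as in the two primeness lemmas of Section~\ref{grnot}'' --- those arguments exploit the special structure of groups in $\mathscr V_1$ (a crossed product of an amenable algebra by a hyperbolic group) or in $\mathscr S$, whereas here $G_1$ is only assumed icc, property (T), and hyperbolic relative to a residually finite (not amenable) subgroup $H_1$; the paper instead invokes \cite[Theorem A]{CKP14}, which is the correct reference at this level of generality.
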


\begin{proof}
	Suppose $\enn:=\El(G_1 \times G_2)= \pee_1 \tp \pee_2$. By \cite[Theorem 5.3]{CDK19}, either $\pee_1 \vee \pee_2 \prec_{\enn} \El(G_1 \times H_2)$, or $\pee_i \prec_{\enn} \El(G_1)$. Note that $\pee_1 \vee \pee_2 \prec_{\enn} \El(G_1 \times H_2)$ implies that $H_2$ has finite index in $G_2$, which contradicts our hypothesis. Without loss of generality, assume that $\pee_1 \prec_{\enn} \El(G_1)$. By \cite[Proposition 12]{OP03}, we have that there exists $u \in \euu(\enn)$, and $s>0$ such that $\pee_1 \subseteq u \El(G_1)^s u^* \subseteq \pee_1 \tp \pee_2$. Using \cite{Ge}, we conclude that $u \El(G_1)^s u^*= \pee_1 \tp \mathcal B$ for some $\bee \subseteq \pee_2$. As $\El(G_1)$ is a prime factor by \cite[Theorem A]{CKP14}, we conclude that $\bee$ is a finite dimensional factor. Hence, $u \El(G_1)^t u^*= \pee_1$ for some $t>0$. Taking relative commutants, we get that $\pee_2= u \El(G_2)^{1/t}u^*$, thereby proving the proposition.
\end{proof}
\begin{theorem} \label{resfinupf}
For $i=1,\cdots,n$ let $H_i \leq G_i$ be inclusions of infinite groups such that $H_i$ is residually finite, and of infinite index in $G_i$ for all $i$. Further assume that each $G_i$ is icc, property (T) group, and that $G_i$ is hyperbolic relative to $H_i$ for all $i$. Then $\emm:= \El(G_1 \times \cdots \times G_n)$ admits a UPF. 
\end{theorem}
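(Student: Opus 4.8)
The plan is to argue by induction on $n$, with Proposition~\ref{basecase} supplying the base case $n=2$. So assume the statement holds for products of fewer than $n$ of these relatively hyperbolic factors, write $G = G_1 \times \cdots \times G_n$ and $\emm = \El(G)$, and suppose $\emm = \pee_1 \tp \pee_2$ for $\rm II_1$ factors $\pee_1,\pee_2$. As tensor factors of a property (T) factor, both $\pee_1$ and $\pee_2$ again have property (T), and they are commuting subfactors with $\pee_1 \vee \pee_2 = \emm$. It suffices to peel off $\El(G_n)$ up to amplification and unitary conjugacy and reduce the remaining piece to the $(n-1)$-fold product, whence iterating yields the full uniqueness statement. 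This mirrors the structure of the proof of Theorem~\ref{upf2}.

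First I would apply the dichotomy \cite[Theorem 5.3]{CDK19} to the commuting property (T) subfactors $\pee_1,\pee_2 \subseteq \emm$, singling out the last factor $G_n$ (hyperbolic relative to $H_n$), i.e. taking $k=n$. This produces $i \in \{1,2\}$ with either (a) $\pee_i \prec_{\emm} \El(G_1 \times \cdots \times G_{n-1})$, or (b) $\pee_1 \vee \pee_2 \prec_{\emm} \El(G_1 \times \cdots \times G_{n-1} \times H_n)$. Exactly as in Proposition~\ref{basecase}, case (b) forces $\emm$ itself to embed into the subalgebra associated with the subgroup $G_1 \times \cdots \times G_{n-1} \times H_n$, whose index in $G$ equals $[G_n:H_n]=\infty$; this is impossible. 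Hence (a) holds, and after relabeling I may assume $\pee_1 \prec_{\emm} \El(G_1 \times \cdots \times G_{n-1})$.

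Next I would pass to relative commutants. Using \cite[Lemma 3.5]{Va08}, the intertwining $\pee_1 \prec_{\emm} \El(G_1 \times \cdots \times G_{n-1})$ dualizes to $\El(G_n) \prec_{\emm} \pee_2$. Applying \cite[Proposition 12]{OP03} I obtain $u \in \euu(\emm)$ and $t>0$ with $\El(G_n) \subseteq u \pee_2^t u^* \subseteq \El(G_1 \times \cdots \times G_{n-1}) \tp \El(G_n)$. Since $\El(G_n)$ appears as a tensor factor on the right, \cite{Ge} gives $u \pee_2^t u^* = \mathcal Q \tp \El(G_n)$ for some subfactor $\mathcal Q \subseteq \El(G_1 \times \cdots \times G_{n-1})$, and $\El(G_n)$ is genuinely prime by \cite[Theorem A]{CKP14}. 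Taking relative commutants once more, and using that $\El(G_n)' \cap \emm = \El(G_1 \times \cdots \times G_{n-1})$, yields $u \pee_1^{1/t} u^* = \mathcal Q' \cap \El(G_1 \times \cdots \times G_{n-1})$; since $\pee_1 \vee \pee_2 = \emm$ this gives the tensor splitting $\El(G_1 \times \cdots \times G_{n-1}) = \mathcal Q \tp (u \pee_1^{1/t} u^*)$.

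Finally, the inductive hypothesis applies to $\El(G_1 \times \cdots \times G_{n-1})$: its unique prime factorization forces the two-factor decomposition $\mathcal Q \tp (u\pee_1^{1/t}u^*)$ to coincide, up to amplification and unitary conjugacy, with a grouping of $\El(G_1),\dots,\El(G_{n-1})$ into two blocks. Recombining with the factor $\El(G_n)$ already split off produces the desired UPF for $\emm$, closing the induction. I expect the main obstacle to be the two relative-commutant passages: one must verify that the intertwinings are strong enough (with the correct support projections) for \cite[Lemma 3.5]{Va08}, \cite[Proposition 12]{OP03} and \cite{Ge} to combine, and one must track the amplification parameters carefully so that the peeled factor and the inductive block splitting glue into a single consistent partition of $\{1,\dots,n\}$.
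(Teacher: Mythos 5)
Your proposal is correct and follows essentially the same route as the paper: the same dichotomy from \cite[Theorem 5.3]{CDK19} applied to the last factor, elimination of the finite-index case, the relative-commutant passage via \cite[Lemma 3.5]{Va08}, the splitting via \cite[Proposition 12]{OP03} and \cite{Ge}, and induction down to Proposition~\ref{basecase}. No substantive differences to report.
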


\begin{proof}
	Let $\emm= \pee_1 \tp \pee_2$. Note that $\pee_i$ is a property (T) factor for $i=1,2$. Using \cite[Theorem 5.3]{CDK19}, we get that:
	\begin{enumerate}
		\item [a)] Either $\pee_i \prec_{\emm} \El(G_1 \times \cdots G_{n-1})$ for some $i\in \{1,2\}$, or
		\item [b)] $\pee_1 \vee \pee_2 \prec_{\emm} \El(G_1 \times \cdots G_{n-1} \times H_n)$.
	\end{enumerate}
Assume case b) above holds. Then $H_n$ has finite index in $G_n$, which is a contradiction. Thus case a) holds. Without loss of generality, assume that $\pee_1 \prec_{\emm} \El(G_1 \times \cdots G_{n-1})$. By \cite[Lemma 3.5]{Va08} we get that $\El(G_n) \prec_{\emm} P_2$. By \cite[Proposition 12]{OP03}, there exists $u_n \in \euu(\emm)$, $t_n >0$ such that $\El(G_n) \subseteq u_n \pee_2^{t_n} u_n^* \subseteq \El(G_1 \times \cdots \times G_{n-1}) \tp \El(G_n)$. By \cite{Ge}, we get that $u_n \pee_2^{t_n}u_n^* = \mathcal Q \tp \El(G_n)$, where $\mathcal Q \subseteq \El(G_1 \times \cdots \times G_{n-1})$. 
\vskip 0.02in
Since $\mathcal Q' \cap \El(G_1 \times \cdots \times G_{n-1})=(u_n \pee_2^{t_n}u_n^*)' \cap \emm = u_n\pee_1^{1/t_n}u_n^*$, we get that $\mathcal Q$ and $u_n\pee_1^{1/t_n}u_n^*$ are commuting property (T) subfactors of $\El(G_1 \times \cdots \times G_{n-1})$. Also, $\pee_1 \vee \pee_2 =\emm$ clearly implies that $\mathcal Q \vee u_n\pee_1^{1/t_n}u_n^* = \El(G_1 \times \cdots \times G_{n-1})$. The result now follows by induction on $n$, and Proposition ~\ref{basecase}.
\end{proof}

We end this section with a nice application of \cite[Theorem 1.4]{PV12} to establish UPF for tensor product of prime full factors with a group factor arising from an icc,hyperbolic group.
\begin{theorem} \label{upf}
	Let $H$ be an icc, hyperbolic group, and let $\enn$ be a full prime factor. Let $\emm=\El(H) \tp \enn$. Then $\emm$ satisfies the UPF property; i.e. whenever $\emm = \pee_1 \tp \pee_2$, there exists $u \in \euu(\emm)$ and $t>0$ such that $\El(H)=u\pee_i^tu^*$ and $\enn= u\pee_{i+1}^{1/t}u^*$. 
\end{theorem}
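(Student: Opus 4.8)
The plan is to detect the ``base'' $\enn$ inside $\emm$ by writing $\emm=\enn\rtimes H$ for the trivial action of the icc hyperbolic group $H$, and then to run the Ozawa--Popa unique prime factorization scheme used already in Proposition~\ref{upf2ind} and Theorem~\ref{resfinupf}. First I would record the structural inputs: $\El(H)$ is prime (Ozawa solidity, $H$ being icc hyperbolic) and full, while $\enn$ is prime and full by hypothesis; consequently $\emm=\El(H)\tp\enn$ is a full factor, and in any splitting $\emm=\pee_1\tp\pee_2$ both tensor factors $\pee_1,\pee_2$ are themselves full, hence non-amenable. The whole problem then reduces, exactly as in the earlier results, to proving that $\pee_i\prec_{\emm}\enn$ for some $i\in\{1,2\}$; once this is in hand the rest is routine.

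The core of the argument is the dichotomy of \cite[Theorem 1.4]{PV12} applied to the crossed product $\emm=\enn\rtimes H$. I would fix a diffuse amenable (say abelian) subalgebra $\bee\subseteq\pee_1$ and apply \cite[Theorem 1.4]{PV12} to $\bee$, which is automatically amenable relative to $\enn$: either $\bee\prec_{\emm}\enn$, or $\mathcal{N}_{\emm}(\bee)''\lessdot_{\emm}\enn$. If the first alternative holds for every diffuse amenable $\bee\subseteq\pee_1$, then \cite[Corollary F.14]{BO08} upgrades this to $\pee_1\prec_{\emm}\enn$ and we are done. Otherwise some $\bee$ falls into the second alternative; since $\pee_2$ commutes with $\pee_1\supseteq\bee$ we have $\pee_2\subseteq\mathcal{N}_{\emm}(\bee)''$, and hence $\pee_2\lessdot_{\emm}\enn$.

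The main obstacle is converting this relative amenability into an honest intertwining, since here, unlike in Proposition~\ref{upf2ind}, the $\pee_i$ carry no property (T); this is exactly where non-amenability of $H$ (and fullness) must enter. I would feed $\pee_2$ itself, now known to be amenable relative to $\enn$, back into \cite[Theorem 1.4]{PV12}: either $\pee_2\prec_{\emm}\enn$, which is what we want, or $\mathcal{N}_{\emm}(\pee_2)''\lessdot_{\emm}\enn$. But $\pee_1=\pee_2'\cap\emm$ normalizes $\pee_2$, so $\mathcal{N}_{\emm}(\pee_2)''=\pee_1\vee\pee_2=\emm$, and the second alternative reads $\emm\lessdot_{\emm}\enn$. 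Since $\emm=\enn\rtimes H$, having the whole crossed product amenable relative to its base forces $H$ to be amenable (equivalently, $\El(H)$ amenable), contradicting that $H$ is icc hyperbolic. Hence $\pee_2\prec_{\emm}\enn$. Alternatively, fullness of $\enn$ provides a spectral gap for $\enn\subseteq\emm$ whose relative commutant $\enn'\cap\emm=\El(H)$ is non-amenable, and this converts $\pee_2\lessdot_{\emm}\enn$ directly into $\pee_2\prec_{\emm}\enn$.

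It remains to run the standard endgame. Assuming without loss of generality that $\pee_1\prec_{\emm}\enn$, \cite[Proposition 12]{OP03} produces $u\in\euu(\emm)$ and $s>0$ with $\pee_1\subseteq u\enn^su^*\subseteq\emm$, and Ge's theorem \cite{Ge} then splits $u\enn^su^*=\pee_1\tp\mathcal Q$ for some $\mathcal Q\subseteq\pee_2$. Primality of $\enn$ (hence of $\enn^s$) forces $\mathcal Q$ to be finite dimensional, so after absorbing it into the amplification we obtain $\pee_1=u\enn^tu^*$ for a suitable $t>0$. Taking relative commutants inside $\emm=\El(H)\tp\enn$ finally gives $\pee_2=(u\enn^tu^*)'\cap\emm=u\El(H)^{1/t}u^*$, which is the asserted unique prime factorization, with the roles of $\pee_1,\pee_2$ interchanged in the complementary case.
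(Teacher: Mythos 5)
Your proof is correct and follows essentially the same route as the paper's: establish fullness of $\pee_1,\pee_2$, run the dichotomy of \cite[Theorem 1.4]{PV12} over amenable subalgebras of $\pee_1$ (with \cite[Corollary F.14]{BO08} in the first branch), and finish with \cite[Proposition 12]{OP03}, Ge's theorem \cite{Ge} and primality of $\enn$. The one step you handle differently is the upgrade from $\pee_2\lessdot_{\emm}\enn$ to $\pee_2\prec_{\emm}\enn$: the paper invokes fullness of $\pee_2$ together with \cite[Lemma 5.2]{IM19}, whereas you feed $\pee_2$ back into \cite[Theorem 1.4]{PV12} and rule out the second alternative because $\mathcal N_{\emm}(\pee_2)''=\emm$ and $\emm\lessdot_{\emm}\enn$ would force $H$ to be amenable. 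That variant is valid and arguably more self-contained, since it bypasses the Isono--Marrakchi lemma (and in fact does not use fullness at all in that step); your ``alternative'' spectral-gap remark, which appeals to fullness of $\enn$ rather than of $\pee_2$, is vaguer and not what the paper does, but it is dispensable given that your first argument is complete.
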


\begin{proof}
	Let $\emm= \pee_1 \tp \pee_2$. As $H$ is nonamenable, hyperbolic, $\El(H)$ is a full factor by \cite[Cor B]{CSU13}. Thus, $\emm$ is a full factor by \cite[Cor 2.3]{Co76}. Hence, $\pee_1$ and $\pee_2$ are also full factors, by \cite[Cor 2.3]{Co76}.
	
	Let $\Aa \subseteq \pee_1$ be amenable. Then, using \cite[Theorem 1.4]{PV12}, either 
	\begin{enumerate}
		\item [a)] $\Aa \prec_{\emm} \enn$, or
		
		\item [b)] $\mathcal N_{\emm}(\Aa)'' \lessdot_{\emm} \enn$.
	\end{enumerate}
	
	Assume Case b) above holds for some amenable $\Aa$. As $\pee_2 \subseteq \mathcal N_{\emm}(\Aa)''$, we deduce that $\pee_2 \lessdot_{\emm} \enn$. Since $\pee_2$ is full, using \cite[Lemma 5.2]{IM19}, we get that $\pee_2 \prec_{\emm} \enn$.
	
	Assume Case a) above holds for every amenable $\Aa \subseteq \pee_1$. Then using \cite[Lemma F.14]{BO08}, we get that $\pee_1 \prec_{\emm} \enn$. 
	
	Without loss of generality, assume that $\pee_1 \prec_{\emm} \enn$. Note that $\pee_1' \cap \emm= \pee_2$ is a factor. Hence by \cite[Proposition 12]{OP03}, we get that there exists $s>0$ and a unitary $v \in \euu(\emm)$ such that $\pee_1 \subseteq v \enn^{s} v^*$. Writing $\emm=\pee_1 \tp \pee_2$, using \cite[Theorem 3.1]{Ge} (see also \cite[Cor 3.3]{CD19}), we get that there exists $\mathcal Q \subseteq \pee_2$ such that $v \enn^{s} v^* = \pee_1 \tp \mathcal Q$. As $\enn$ is a prime factor, so is $v \enn^{s} v^*$. Hence $\mathcal Q$ must be a finite dimensional factor, and thus is isomorphic to $\mathbb M_n(\mc)$ for some $n \in \mathbb N$. Altogether, this yields \begin{equation} \label{tensordecom}
	v \enn v^*= \pee_1^t \text{ for some } t>0.
	\end{equation} 
	Hence, using equation~\ref{tensordecom}, we get that $v \El(H)v^{\ast} = (v \enn v^*)' \cap \emm= (\pee_1^t) ' \cap \emm= \pee_2^{1/t}$. Taking $u=v^*$ finishes the proof.
\end{proof}


\section*{Acknowledgments}
The author is indebted to Prof.\ Ionut Chifan and Prof.\ Jesse Peterson for their invaluable comments and suggestions regarding the contents of this paper, and for their constant encouragement. The author would also like to express his gratitude towards Dr.\ Krishnendu Khan for many stimulating conversations regarding the contents of this paper.\par
This paper is dedicated to the loving memory of Prof.\ Vaughan Jones.

\noindent
\email{sayan.das@ucr.edu}\\
\textsc{Department of Mathematics, , University of California Riverside, 900 University Ave., Riverside, CA 92521, USA}


\end{document}